\documentclass[11pt]{amsart}
\usepackage[margin=1.15in]{geometry}
\usepackage{amsmath,amssymb,amsfonts,mathtools,hyperref}
\usepackage{url}

\newcommand{\A}{\mathbb A}
\newcommand{\C}{\mathbb C}

\newcommand{\Q}{\mathbb Q}
\newcommand{\R}{\mathbb R}
\newcommand{\Z}{\mathbb Z}

\newcommand{\PP}{\mathbb P}

\newcommand{\Jac}{\operatorname{Jac}}
\newcommand{\Vol}{\operatorname{Vol}}
\newcommand{\wt}{\operatorname{wt}}
\newcommand{\disc}{\operatorname{disc}}
\newcommand{\divisor}{\operatorname{div}}
\newcommand{\calR}{\mathcal R}
\newcommand{\calW}{\mathcal W}
\newcommand{\calG}{\mathcal G}
\newcommand{\calM}{\mathcal M}

\newtheorem{theorem}{Theorem}[section]
\newtheorem{proposition}[theorem]{Proposition}
\newtheorem{lemma}[theorem]{Lemma}

\theoremstyle{definition}

\newtheorem{remark}[theorem]{Remark}

\numberwithin{equation}{section}

\title[Marked rational $3$-torsion in genus $2$]{Counting odd genus $2$ curves with a marked rational $3$-torsion point}

\author{Elvira Lupoian}
\address{Department of Mathematics, University College London, London, WC1H 0AY, United Kingdom}
\email{e.lupoian@ucl.ac.uk}

\author{Lazar Radi\v{c}evi\'{c}}
\address{Mathematical Institute of the Serbian Academy of Sciences and Arts, Belgrade, Serbia}
\email{lazaradicevic@gmail.com}

\subjclass[2020]{11N45, 11G10, 11D45, 11G50}
\begin{document}
\begin{abstract}
 In this paper we count, ordered by naive height, the genus $2$ curves over the rationals which admit a monic Weierstrass model of odd degree and whose Jacobian has a marked rational $3$-torsion point.  
\end{abstract}

\maketitle

\section{Introduction}
Let $A$ be an abelian variety defined over $\Q$. The Mordell-Weil theorem states that the group of rational points $A(\Q)$ is finitely generated. Beyond elliptic curves, our understanding of the possible rank or torsion of $A$ is limited. An interesting, and more approachable problem, is that of counting abelian varieties whose Mordell-Weil group exhibits certain properties. 

Recently, the problem of counting elliptic curves with prescribed level structure has been rather popular. Harron and Snowden \cite{harron2017counting} determined the frequency with which each torsion subgroup in Mazur's classification \cite{mazur1977modular} occurs. A number of works count elliptic curves with a marked $N$-isogeny. Boggess-Sankar \cite{boggesssankar} count elliptic curves with a rational $N$-isogeny for $N \in \{2,3,4,5,6\}$, Pizzo-Pomerance-Voight \cite{pizzo2020counting} count elliptic curves with a rational isogeny of degree $3$, Molnar-Voight \cite{molnar2023counting} count those with a $7$-isogeny and Arrango-Han-Padurariu those with a $5$-isogeny. Variations of this problem have been considered for other level structures and other fields, for instance in \cite{pomeranceshaefer}, \cite{bruin2022counting}, \cite{gajovic2025density}, \cite{cullinan2022probabilistic}, \cite{couvillion2025counting}. 

Very often the problem of counting level-structures reduces to the problem of counting points of bounded height on weighted projective stacks. A number of recent results focus on studying this problem. Phillips \cite{phillips2026points} gives asymptotics for the number of rational points in the domain of a morphism of weighted projective stacks, and uses this to count elliptic curves with certain level structures. Asymptotics for points on the modular stack of elliptic curves with specified cyclic isogen are studied by Darda and Han \cite{darda2026stacky}. 

Beyond the elliptic curve case, Genao, Phillips, Saia, Santens, and Yin \cite{genao2025counting}  count principally polarized abelian surfaces with quaternionic multiplication, or potential quaternionic multiplication, by reducing to counting points on certain genus-zero Shimura curves and their Atkin--Lehner quotients.  Chan, Loughran, and Rome \cite{chan2026thin} show that almost all odd hyperelliptic curves have Jacobians admitting no prescribed level-$G$ structure, under mild hypotheses on the level structure.  To the best of our knowledge, the present paper is the first to determine an exact asymptotic formula for the number of curves with prescribed level structure in a setting where the moduli variety is not a curve.

\subsection{Main Result}
In this paper we study genus $2$ curves with a marked rational $3$-torsion point on the Jacobian. The setup of our problems is as follows.

For a square-free polynomial 
\[
        f(x)=x^5+a_3x^3+a_2x^2+a_1x+a_0\in \Z[x]
\]
we write $C_f/\Q$ for the genus $2$ curve with a Weierstrass model $y^2 = f(x)$ and $J_f$ for the Jacobian of $C_f$.  
We define the \textit{height of} $f$ to be 
\[
        H(f)=\max\{|a_3|^{1/4}, |a_2|^{1/6}, |a_1|^{1/8}, |a_0|^{1/10}\}.
\]
We say that $f$ is \textit{minimal} if there is no prime $p$ such that $p^{10 - 2i} \vert a_i$ for all $0 \le i \le 3$. 

For $X \in \R_{\ge 0}$, we write $\calW^{\rm mark}(X)$ for the set of pairs $(f,T)$, where
\begin{enumerate}
\item $f=x^5 + a_3x^3+a_2x^2+a_1x+a_0\in\Z[x]$ is square-free and minimal;
\item $H(f)\le X$;
\item $0\ne T\in J_f(\Q)[3]$.
\end{enumerate}
Our main result is the following. 
\begin{theorem}\label{thm:main}
There exists a positive, effectively computable constant $c$ such that
\[
        \#\calW^{\rm mark}(X)=cX^{10}+o(X^{10}).
\]
More precisely,
\begin{equation}\label{eq:main-constant}
        c=\Vol(\calR(1))\,\beta_{2,3}\prod_{p\ge5}(1-p^{-10}),
\end{equation}
where $\calR(1)$ is a compact subset of $\R^4$ defined in Section
\ref{sec:height}, and $\beta_{2,3}>0$ is the effectively computable local density at the primes $2$ and $3$, defined in Section \ref{sec:count}.
\end{theorem}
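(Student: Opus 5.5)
Throughout, we write $\Jac$ for $J_f$. The plan is to parametrize pairs $(f,T)$ explicitly, so that the count becomes a lattice-point count in a region of $\R^4$.

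\textbf{Step 1: parametrization.} A nonzero $3$-torsion point $T$ on $J_f$ for odd $f$ is represented by a divisor class $D-2\infty$ with $3D\sim 6\infty$. Equivalently, there is a function whose divisor is $3D-6\infty$. Such a function has the form $y-q(x)$ with $\deg q\le2$, and then
\[
f(x)=q(x)^2+c\,r(x)^3
\]
with $r$ monic quadratic (when $D$ is a generic degree-$2$ divisor). The degenerate case, where $D$ has degree $1$, also has to be treated, together with the constraint that $f$ be monic of degree $5$.

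Since $q^2$ has degree at most $4$, the degree-$6$ term of $c\,r^3$ must vanish. This forces the correct shape to be $f=q^2+r^3$ with a suitably normalized leading coefficient, in which $q$ has degree $\le2$ and $r$ is a monic quadratic twisted so that $\deg f=5$. I would work out the normalization carefully, using weights. The outcome should be a polynomial map $\phi:\A^4\to\A^4$ from coefficient data $(u_1,\dots,u_4)$, weighted-homogeneous with respect to the weights $(4,6,8,10)$ on the target, with the following property: pairs $(f,T)$ with $f$ integral correspond bijectively to points $u$ of a lattice $L\subset\Q^4$ with $\phi(u)\in\Z^4$. Killing the $x^4$ coefficient by the substitution $x\mapsto x-a$ fixes the translation freedom. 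The key point is that the weights on the source make $\phi$ weighted-homogeneous of degree $1$, so a generically finite dominant map between $4$-dimensional spaces scales like $X^{\sum \text{weights}}$. The expected source weights are $1,2,3,4$ in $X$ up to normalization, and summing the source weights should give $10$, matching the exponent of $X^{10}$. I would check that the fibres of $\phi$ are exactly the distinct $T$ (injectivity on marked pairs) away from a proper closed subset.

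\textbf{Step 2: height region.} Define $\calR(X)=\{u\in\R^4: H(\phi(u))\le X\}$. By homogeneity, $\calR(X)$ is $\calR(1)$ scaled by $X^{w_i}$ in the coordinates. I would show $\calR(1)$ is compact: $H(\phi(u))=0$ forces $u=0$, since $q^2+r^3=x^5$ forces $q=0$ and $r=x^2$ in the normalized form. Its boundary is semialgebraic, hence Lipschitz parametrizable. Davenport's lemma then gives $\#(L'\cap\calR(X))=\Vol(\calR(1))X^{10}/\operatorname{covol}(L')+O(X^{10-1})$ for each fixed lattice $L'$.

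\textbf{Step 3: congruences and minimality.} Integrality of $\phi(u)$ and minimality are local conditions. For $p\ge5$ the parametrization should be integral (denominators only at $2$ and $3$), so integrality imposes nothing there. Minimality at $p$ then removes the points with $u\equiv0$ in weighted scaling, that is $p^{w_i}\mid u_i$, which has density $p^{-10}$. At $p=2,3$ the integrality condition defines a finite union of cosets, and together with minimality there gives $\beta_{2,3}$. To sieve over all primes, Möbius inversion over the minimality condition is needed. The tail $p>Y$ is handled by the trivial bound $\#\{u: p^{w_i}\mid u_i,\ u\in\calR(X)\}\ll X^{10}/p^{10}+X^{9}/p^{\dots}+\cdots+1$, summed over $p$. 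This is fine because $p\ll X$ is forced whenever the scaled point is nonzero.

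\textbf{Step 4: excluded loci.} Finally I would show that the following sets contribute $o(X^{10})$: points with $\phi(u)$ not square-free (the discriminant hypersurface), the degenerate-$D$ locus, and points where $\phi$ fails to be injective onto marked pairs. These are proper Zariski-closed subsets, so the count on them is $O(X^{10-1})$ by lattice counting in lower dimension. This yields $c=\Vol(\calR(1))\beta_{2,3}\prod_{p\ge5}(1-p^{-10})$.

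The main obstacle I expect is Step 1, specifically getting the exact bijection between marked pairs and integral points. The subtleties are the rational versus integral descent of the $3$-torsion divisor (which determines where $\beta_{2,3}$ arises) and the proof that no primes $p\ge5$ introduce extra denominators. I would first try to prove $p$-integrality of $q$ and $r$ from integrality of $f$ by a Newton-polygon or valuation argument on $f-q^2=r^3$.
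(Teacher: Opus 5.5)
\textbf{Step 1 rests on an incorrect identity and assumes the hard part.} Your outer framework matches the paper's: source weights $1,2,3,4$, Davenport on a compact region, densities $\beta_{2,3}$ and $1-p^{-10}$, a tail sieve, and discarding proper subvarieties. The problem is Step 1, together with the compactness claim in Step 2. A function with divisor exactly $3D-6\infty$ has a pole of order $6$ at $\infty$. Since $\operatorname{ord}_\infty x=-2$ and $\operatorname{ord}_\infty y=-5$, it must be $Ay-G(x)$ with $G$ of degree exactly $3$, and monic after scaling. It cannot be $y-q(x)$ with $\deg q\le 2$: that function has a pole of order $5$, so its zero divisor has degree $5$ and cannot equal $3D$. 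This is the degree mismatch you ran into, and no normalisation of $r$ repairs it. Moreover $A\neq 0$, since otherwise the zero divisor is stable under the hyperelliptic involution and $T=0$. Taking norms gives $G^2-A^2f=H^3$ with $G,H$ monic of degrees $3,2$; the $x^6$ terms cancel automatically.

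\textbf{The missing ingredient is a base-point-free parametrisation.} You posit a polynomial $\phi$ with $\phi^{-1}(0)=\{0\}$, but this is exactly what fails for the obvious coordinates. Write $G=u^3+\alpha u+\beta$ with $u=x+B/3$. In the coordinates $(A,B,\alpha,\beta)$, whose weights $1,2,4,6$ do not even sum to $10$, the coefficients of $f=(G^2-H^3)/A^2$ have a pole along $A=0$. Indeed, at $A=0$ one gets
\[
G^2-H^3=2\beta u^3-\tfrac{\alpha^2}{3}u^2+2\alpha\beta u+\beta^2-\tfrac{8}{27}\alpha^3\neq 0 .
\]
The paper's key step is the substitution $\alpha=AJ$, $\beta=A^2E/3$. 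After it, the coefficients become the polynomials $\Theta_i/729$ of weights $4,6,8,10$ in $(A,B,J,E)$. The paper then verifies, by a Groebner basis for $A=1$ and a direct elimination for $A=0$, that the $\Theta_i$ have no common zero except the origin. That is what makes $\calR(1)$ compact and the lattice count valid.

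Your argument that ``$q^2+r^3=x^5$ forces $q=0$, $r=x^2$'' does not address this. That identity has no solutions at all for degree reasons. In any case the dangerous points lie on $A=0$, where the affine formula for $f$ says nothing.

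\textbf{The $p\ge 5$ claims in Step 3 are also unproved.} They do not follow formally from ``denominators only at $2,3$''. The criterion that non-minimality holds iff $p\mid A$, $p^2\mid B$, $p^3\mid J$, $p^4\mid E$ uses that the ideal generated by the $\Theta_i(1,B,J,E)$ over $\Z$ contains $9$. It also needs a separate check at $p=5$ on the locus $A=0$. You further need the converse statement, that $F_P$ being $p$-integral implies $P\in\Z_p^4$; this comes from that criterion by a rescaling argument.

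Until an explicit base-point-free $\phi$ is produced and these facts are checked, the proposal does not prove the theorem.
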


A degree 5 monic integral equation as above determines a genus $2$ curve $C/\Q$ with a marked rational Weierstrass point at infinity. Every pair $(C,P)$ consisting of a genus two curve $C/\Q$ and a rational Weierstrass point $P$ can be defined by such an equation, and two such equations $y^2=x^5 + a_3x^3+a_2x^2+a_1x+a_0$ and $y^2=x^5 + a'_3x^3+a'_2x^2+a'_1x+a'_0$ define isomorphic curves if and only if there exists  $u\in \Q^\times$ such that
$a_i'=u^{10 -2i}a_i$ for $0 \le i \le 3$. 
It follows that any pair $(C,P)$ can be represented by a unique minimal integral $f$, and so we can view Theorem \ref{thm:main} as counting triples $(C,P,T)$ where $T \in \Jac C[3]$ is a non-trivial 3-torsion point. The extra data of a Weierstrass point rigidifies the moduli problem and allows us to introduce a simple height function.

\subsection{Methodology}
 The main ingredient in our proof is a parametrisation of $(f, T) \in \mathcal{W}^{\rm mark}(X)$ which is well suited to classical counting methods using the geometry of numbers. We prove the following.

\begin{proposition}
There exist weighted homogeneous polynomials 
$$\Theta_0(A,B,J,E), \Theta_1(A,B,J,E), \Theta_2(A,B,J,E), \Theta_3(A,B,J,E) \in \Z[A,B,J,E]$$
 with no common zeros except $(0,0,0,0)$, which parametrise the set of pairs $(f,T)$, where $f$ is a monic, square-free degree $5$ polynomial and $T$ is a marked $3$-torsion point on the Jacobian of the genus $2$ curve defined by $y^2 = f(x)$. 
\end{proposition}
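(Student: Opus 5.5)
The plan is to translate a nontrivial rational $3$-torsion point into an algebraic identity among polynomials, using the Mumford/Riemann--Roch description of $J_f$ with respect to the rational Weierstrass point $\infty$.

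\textbf{Step 1: every $T$ gives an identity $f=a^2-u^3$.} Every nonzero $T\in J_f(\Q)$ is represented uniquely as $[D-2\infty]$ with $D$ effective of degree $\le 2$ and containing no pair $P+\iota P$. Degree-$1$ classes $[P-\infty]$ cannot be $3$-torsion, since $3$ is a Weierstrass gap at $\infty$. So $D=P_1+P_2$ has degree $2$, and $3T=0$ means $3D-6\infty$ is principal. The Riemann--Roch space $L(6\infty)$ is spanned by $1,x,x^2,x^3,y$. A function in it with divisor $3D-6\infty$ must have pole order exactly $6$, so after scaling it is $y-a(x)$ with $a$ a monic cubic. (The coefficient of $y$ cannot vanish, because a polynomial in $x$ has divisor invariant under $\iota$.) Taking the norm gives
\[
a(x)^2-f(x)=u(x)^3,
\]
where $u$ is the monic quadratic of the Mumford representation of $D$. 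Conversely, any monic cubic $a$ and monic quadratic $u$ for which $f:=a^2-u^3$ is monic of degree $5$ with no $x^4$-term produce such a $T$. The map $(a,u)\mapsto(f,T)$ is a bijection, because $D$ and the function are unique. Under this bijection, $-T$ corresponds to $(-a,u)$, which I will renormalise, for instance via $x\mapsto -x$ and $y\mapsto$ appropriate scaling, so that the sign convention stays consistent.

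\textbf{Step 2: solve the linear constraints.} Write $a=x^3+\alpha x^2+\beta x+\gamma$ and $u=x^2+\delta x+\varepsilon$. The $x^6$ coefficients cancel automatically. The conditions that the $x^5$ coefficient is $1$ and the $x^4$ coefficient is $0$ are
\[
2\alpha-3\delta=1,\qquad \alpha^2+2\beta-3\delta^2-3\varepsilon=0.
\]
These are linear in $(\alpha,\beta)$ given the rest. Solving them leaves four free parameters $(\delta,\varepsilon,\gamma,\cdot)$.

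\textbf{Step 3: make the parametrisation weighted homogeneous and integral.} I would absorb the inhomogeneity by scaling $x$ by a weight-$1$ parameter. Concretely, I would allow $a$ and $u$ to have general leading terms tied to one variable, or equivalently clear denominators $2$ and $3$ by substituting $\delta=(2\alpha-1)/3$ and rescaling. This should produce new variables $A,B,J,E$ of suitable weights, and the coefficients $a_3,a_2,a_1,a_0$ of $f$ become polynomials $\Theta_3,\dots,\Theta_0\in\Z[A,B,J,E]$ of weights $4,6,8,10$. The substitution will introduce only powers of $2$ and $3$ as denominators. This is consistent with the local densities at $2$ and $3$ being singled out as $\beta_{2,3}$ in Theorem \ref{thm:main}: the integrality of $(A,B,J,E)$ versus that of $f$ differs only at those primes.

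\textbf{Step 4: no nontrivial common zero (main obstacle).} I expect this to be the hardest step. By weighted homogeneity, a nonzero complex common zero of the $\Theta_i$ would give an identity $a^2-u^3=x^5$ with $a,u$ monic over $\C$, up to the scaling. To rule this out I would use Mason--Stothers. If $a$, $u$ and $x$ are pairwise coprime, then
\[
6=\deg a^2\le \deg\operatorname{rad}(a u x)-1\le 3+2+1-1=5,
\]
a contradiction. The remaining work is the non-coprime cases. If $x\mid u$, then $x\mid a$. If $g=\gcd(a,u)$ is nontrivial, then $g^2\mid x^5$, which forces $g$ to be a power of $x$. Each such case reduces to a smaller identity that I would rule out by direct comparison of the low-order coefficients, or again by Mason--Stothers after dividing out. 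Finally, squarefreeness of $f$ corresponds to the nonvanishing of the discriminant pulled back along the $\Theta_i$, which I would simply impose as an open condition.
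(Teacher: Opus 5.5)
\textbf{There is a genuine gap, and it starts in Step~1.} Since $\varphi\in L(6\infty)$ is determined only up to one scalar, you can normalise the coefficient of $y$ to $1$ \emph{or} make the cubic monic, but not both. The correct normal form is $\varphi=Ay-G(x)$, with $G$ monic and $A\neq0$ an invariant of $(f,T)$. This gives $G^2-A^2f=H^3$, as in Proposition~\ref{prop:usualparam}. Your identity $a^2-u^3=f$, with $a$ and $u$ both monic, only captures the pairs with $A=\pm1$, so $(a,u)\mapsto(f,T)$ is not surjective. The symptom shows up in Step~2: five unknowns $\alpha,\beta,\gamma,\delta,\varepsilon$ and two independent equations leave three parameters, not four. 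Your ``$\cdot$'' is precisely the missing $A$. It also shows up in your treatment of $-T$, which is simply $A\mapsto -A$; your proposed renormalisation via $x\mapsto -x$ would change $f$.

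\textbf{The substance of the statement is left undone.} Once $A$ is restored, $f=(G^2-H^3)/A^2$. The whole content of the statement is choosing coordinates in which this is a polynomial map with no base points, and your Step~3 leaves that as ``should produce''. In the obvious coordinates $A$ and $G=x^3+Bx^2+g_1x+g_0$, of weights $(1,2,4,6)$, the quotient is not polynomial. Setting $A=B=0$ in $G^2-H^3$, with $H$ fixed by the $x^5$ and $x^4$ coefficients, gives $2g_0x^3-\frac13g_1^2x^2+2g_0g_1x+g_0^2-\frac{8}{27}g_1^3\neq0$. So $G^2-H^3$ is not divisible by $A$, let alone $A^2$, and homogenising yields only a rational map.

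The paper's key move is to complete the cube, $u=x+B/3$ and $G=u^3+\alpha u+\beta$, and then substitute $\alpha=AJ$ and $\beta=A^2E/3$. This changes the weights to $(1,2,3,4)$ and forces $H=u^2-\frac{A^2}{3}u+\frac{-A^4+5A^2B+6AJ}{9}$. It also makes $G^2-H^3$ divisible by $A^2$, which produces the explicit $\Theta_i$.

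\textbf{Step~4 checks the wrong locus.} Mason--Stothers needs an actual identity $G^2-A^2x^5=H^3$, and that exists only where $A\neq0$. With the corrected identity, your coprime and gcd analysis does handle that case; the paper uses a Groebner basis at $A=1$ instead. The substantive case is the boundary $A=0$, where no such identity is available and you must compute with the explicit forms. There, $\Theta_3=\Theta_2=0$ give $E=\frac53B^2$ and $10B^3=9J^2$. Then $\Theta_1$ reduces to $-45B^4$, forcing $B=J=E=0$. Without a concrete choice of coordinates this check cannot even be formulated, and the naive choice does not give a morphism.
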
 

For the explicit definition of $\Theta_0,\ldots,\Theta_3$ see \eqref{eq:theta3}--\eqref{eq:theta0}, and for the precise statement see Proposition \ref{param}, where we show that for any $(f, T)$ as above,  
$$ f(x) = F_P(x) = x^5 + \frac{\Theta_3(P)}{729}x^3 + \frac{\Theta_2(P)}{729}x^2 + \frac{\Theta_1(P)}{729} x + \frac{\Theta_0(P)
}{729}$$
for some $P = (A,B, J,E)$ defined in terms of the coefficients of $f$. The marked $3$-torsion point $T$ is represented by a divisor cut out by a function whose coefficients  are determined by $P$. 

To prove our result, we derive necessary and sufficient conditions on $P = (A,B,J,E)$ to ensure that  $(f, T) = (F_P, T_P) \in \mathcal{W}^{\rm mark}(X)$. The requirements that $f(x) \in \Z[x]$ and that $f(x)$ is minimal correspond to $F_P(x)$ being $p$-integral and $p$-minimal for all primes $p$ (see \S \ref{sec:local} for definitions). We derive congruence conditions on $(A,B,J,E)$ characterising these properties. This essentially reduces the problem to counting rational points of bounded height on the weighted projective space $\PP(1,2,3,4)$, which we do using the geometry of numbers.

Finding the parametrisation $\Theta_0,\ldots,\Theta_3$ was the main challenge of this paper.  To explain the issue, it is helpful to first consider the analogous problem of counting elliptic curves with a marked rational $3$-torsion point treated in \cite{harron2017counting}.  We consider Weierstrass equations
\[
        y^2=x^3+Ax+B,
\]
ordered by the height function \(H(A,B)=\max(|A|^{1/4},|B|^{1/6})\), and the problem of counting minimal equations with a marked rational 3-torsion point. Every elliptic curve over \(\Q\) with a marked rational \(3\)-torsion point \(T\) can be put into Tate normal form
\[
        y^2+uxy+vy=x^3,
\]
with \(T=(0,0)\). Conversely, any such pair $(u,v) \in \Q^2$ with $\Delta(u,v)=(u^3-27v)v^3 \ne 0$ defines an elliptic curve with a marked 3-torsion point $(0,0)$. This is the universal elliptic curve over the affine modular curve $Y_1(3)$. To count short Weierstrass equations with a marked 3-torsion point, we transform the Tate normal form into short Weierstrass form $y^2=x^3+c_4(u,v)x+c_6(u,v)$, where $c_4$ and $c_6$ are polynomials in $u$ and $v$, that don't have any common zeros besides $(0,0)$. It follows that they define a morphism from the compactified modular curve $\PP(1,3) \to \PP(4,6)$. Together with a sieve argument, this reduces the problem to counting integer solutions to the inequality $H(c_4(u,v),c_6(u,v)) \leq X$.

Our genus \(2\) argument follows the same general strategy.  The first step is to write down a rational parametrisation of the family of genus \(2\) curves with a marked rational \(3\)-torsion point. This shows the modular threefold, which plays the role of the affine modular curve \(Y\) in the elliptic curve setting, is birational to the affine \(3\)-space.  The tricky part is the analogue of passing from \(Y\) to \(X\). To reduce the problem to counting lattice points in a bounded region, we need to compactify the threefold $Y$ into a weighted projective space $X$ in such a way that the rational map from $Y$ to $\PP(4,6,8,10)$ extends to a morphism $X \to \PP(4,6,8,10)$. We have managed to find such a compactification, and we show that  we can take $X=\PP(1,2,3,4)$. In the modular curve case, any rational map from a smooth projective curve extends to a morphism, and so this problem is easy to solve, but this is no longer true for higher dimensional varieties. In our case, naively homogenising the formulas will only define a rational map on the compactification, not a morphism.

Our construction is somewhat ad-hoc, and it would be interesting to understand it better, as we expect similar problems to arise when counting curves with more general level structures. For example, one can consider the family of curves \((C,P)\), where \(C\) is a hyperelliptic curve of genus \(g\) and \(P\) is a rational Weierstrass point.  In this setting, the analogue of Proposition \ref{prop:usualparam} holds, and the affine parameter space is easy to describe; the more delicate issue is to find, if it exists, a weighted projective compactification on which the map to the underlying Weierstrass model is base-point-free, and which therefore reduces the problem to counting lattice points in a compact weighted box.

\subsection{Outline}
This paper is organised as follows.  In Section \ref{sec:parametrisation} we construct the weighted homogeneous parametrisation of monic degree $5$ Weierstrass equations with a marked $3$-torsion point.  In Section \ref{sec:height} we use this parametrisation to define a compact subset in $\R^4$ in which we count lattice points.   In Section \ref{sec:local} we determine the congruence conditions which characterise lattice points that define minimal Weierstrass equations.  In Section \ref{sec:count} we combine these ingredients in a sieve and prove Theorem \ref{thm:main}.  The appendix contains \texttt{MAGMA} code used to carry out the Groebner basis computations needed in Section \ref{sec:height} and Section \ref{sec:local}.

\subsection*{Acknowledgments} We thank Netan Dogra for organising the seminar at which our collaboration began. We thank Ratko Darda and John Voight for helpful conversations. 
The first author is supported by the EPSRC Doctoral Prize fellowship EP/W524335/1 and the extension UKRI3030. The second author was supported by the Mathematical Institute
of Serbian Academy of Sciences and by the UKRI grant MR/T041609/2 during his stay at King’s College
London.

\section{The parametrisation}\label{sec:parametrisation}

We begin by recalling the elementary affine description of odd, monic, genus $2$ curves with a marked $3$-torsion point.  A similar description appears  in \cite[Lemma 3]{bruinflynntesta}.

\begin{proposition}\label{prop:usualparam}
Let $K$ be a field of characteristic different from $2$, and let
\[
        f(x)=x^5+a_3x^3+a_2x^2+a_1x+a_0\in K[x]
\]
be square-free.  Put $C_f:y^2=f(x)$.  Non-zero points
$T\in\Jac(C_f)(K)[3]$ are in bijection with triples $(A,G,H)$ where 
$A\in K^\times$, $G\in K[x]$ is monic of degree $3$, $H\in K[x]$ is monic of
 degree $2$, and
\begin{equation}\label{eq:usual-param}
        G(x)^2-A^2f(x)=H(x)^3.
\end{equation}
The $3$-torsion point corresponding to $(A,G,H)$ is $[D-2\infty]$, where $D$ is the divisor cut out by
\[
        H(x)=0,\qquad Ay=G(x).
\]
\end{proposition}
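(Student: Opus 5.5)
We use Mumford representations on the Jacobian of the odd-degree curve $C_f$. Every class in $\Jac(C_f)(K)$ is represented uniquely by a reduced divisor $D-d\infty$ with $D$ effective of degree $d\le 2$, not containing $\infty$ and containing no pair $\{Q,\iota Q\}$, where $\iota$ is the hyperelliptic involution. Such a $D$ is encoded by its Mumford pair $(u,v)$: $u$ is monic of degree $d$, $\deg v<d$, and $u\mid v^2-f$.

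First suppose $T=[D-2\infty]$ is a non-zero $3$-torsion point. Then $3D-6\infty$ is principal, so there is a function $\phi\in L(6\infty)$ with $\divisor(\phi)=3D-6\infty$. The space $L(6\infty)$ is spanned by $1,x,x^2,x^3,y$, since $x$ has pole order $2$ and $y$ has pole order $5$ at $\infty$. So $\phi=\lambda y-G(x)$ with $\deg G\le 3$.

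\begin{itemize}
\item If $\lambda=0$, then $\phi$ is a polynomial in $x$, and its divisor is stable under $\iota$. Hence $3D=3\iota D$, so $D=\iota D$, which forces $T=-T$. Together with $3T=0$ this gives $T=0$.
\item If $\deg G<3$, then $\phi$ has pole order exactly $5$ at $\infty$. So $\divisor(\phi)$ has degree $5$ at finite places, which is impossible since $3\deg D=6$.
\end{itemize}

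Therefore $\lambda\ne0$ and $\deg G=3$. Scaling $\phi$, we may take $G$ monic and write $\phi=Ay-G$ with $A\in K^\times$. The norm of $\phi$ from $K(C_f)$ to $K(x)$ is
\[
N(\phi)=(Ay-G)(-Ay-G)=G^2-A^2f,
\]
a polynomial of degree $6$ with leading coefficient $1$. Its zeros are the $x$-coordinates of the zeros of $\phi$, with multiplicity, so it equals $u_D(x)^3$ for the monic quadratic $u_D$ with divisor $D$ on the $x$-line. Setting $H=u_D$ gives \eqref{eq:usual-param}. The divisor $D$ is then cut out by $H=0$ and $Ay=G$.

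For uniqueness of the triple, suppose $(A,G)$ and $(A',G')$ both arise from $T$. Then $\phi/\phi'$ has trivial divisor and so is a constant. Comparing the monic $x^3$-coefficients shows the constant is $1$, so $A=A'$ and $G=G'$.

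Conversely, take a triple $(A,G,H)$ satisfying \eqref{eq:usual-param} and set $\phi=Ay-G$.

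\begin{itemize}
\item \emph{$\phi$ has degree $6$.} Since $\deg G=3$ and $y$ has pole order $5$, $\phi$ has a pole of order exactly $6$ at $\infty$.
\item \emph{Its zeros lie over $H=0$.} The relation $N(\phi)=H^3$ shows the zero divisor $Z$ of $\phi$ is effective of degree $6$ and lies over the zeros of $H$.
\item \emph{$\phi$ and $\iota^*\phi$ share no zero.} At a common zero we would have $y=0$ and $G=0$. Then $f=(G^2-H^3)/A^2$ and $H$ would share a root $r$, and $G(r)=0$ too. From $G^2-A^2f=H^3$, the order of vanishing of $f$ at $r$ would be at least $2$, contradicting that $f$ is square-free.
\item \emph{$Z=3D$.} Because $\phi$ and $\iota^*\phi$ share no zero, above each root of $H$ of multiplicity $m$ all the zeros of $\phi$ sit at one point, with order matching $3m$ (ramified points need a separate local check). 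So $Z=3D$, where $D$ is the effective degree-$2$ divisor cut out by $H=0$, $Ay=G$.
\end{itemize}

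It follows that $3[D-2\infty]=0$. Moreover $D$ is reduced, since it contains no $\iota$-pair, and $D\neq 2\infty$, so $T\neq 0$. The two constructions are mutually inverse: the forward direction recovers $(A,G,H)$ from $T$ uniquely, and the converse produces $T$ with the same $\phi$.

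The main obstacle is the divisor bookkeeping in the converse: showing that $\phi$ vanishes to order exactly three times the multiplicities in $D$, with no cancellation at Weierstrass points, and that $D$ is reduced. Square-freeness of $f$ settles this, as in the common-root argument above.
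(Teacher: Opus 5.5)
Your argument follows the paper's route. You use the basis $1,x,x^2,x^3,y$ of $L(6\infty)$ and rule out a vanishing $y$-coefficient by $\iota$-invariance. You use the norm identity $(Ay-G)(-Ay-G)=G^2-A^2f=H^3$. In the converse, square-freeness of $f$ separates the zeros of $Ay-G$ and $Ay+G$. Your handling of uniqueness, and of the two maps being mutually inverse via uniqueness of Mumford-reduced representatives, is more careful than the paper's.

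\textbf{Gap.} The forward direction has a missing case. You introduce reduced representatives $D-d\infty$ with $d\le 2$, then immediately write $T=[D-2\infty]$ with $D$ affine. You never exclude $d=1$, i.e.\ $T=[P-\infty]$ with $P\ne\infty$. Every later step uses $d=2$ (for instance the count of ``$3\deg D=6$'' finite zeros). So as written you have not shown that every non-zero $3$-torsion point yields a triple. The paper treats exactly this case in the first lines of its proof.

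The repair is short. $3(P-\infty)=\divisor(\psi)$ would force $\psi\in L(3\infty)=\langle 1,x\rangle$. Every non-constant element of this space has a pole of order exactly $2$ at $\infty$, never $3$. So $d=1$ cannot occur, and $d=0$ means $T=0$.

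\textbf{Smaller loose end.} You mention a ``separate local check'' at ramified points but do not perform it; in fact it is vacuous. If a root $r$ of $H$ were a root of $f$, then $G(r)^2=A^2f(r)+H(r)^3=0$. So $(r,0)$ would be a common zero of $\phi$ and $\iota^*\phi$, which your third bullet already rules out. Hence no Weierstrass point lies over $H=0$. Your unramified argument is then all that is needed: above a root of multiplicity $m$ we have $\operatorname{ord}_Q\phi+\operatorname{ord}_{\iota Q}\phi=3m$ with one term zero, so all $3m$ zeros sit at a single point.
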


\begin{proof}
Let $0\ne T\in\Jac(C_f)(K)[3]$.  Write $T=[D-2\infty]$, where $D$ is the
reduced effective divisor of degree $2$ representing the class, allowing $\infty$
to occur in $D$.  The divisor $D$ is affine: otherwise $T=[P-\infty]$ for some
point $P$, and the relation
$3T=0$ would give a function in $L(3\infty)=\langle 1,x\rangle$ with a triple
zero at $P$, forcing $P$ to be a Weierstrass point and hence $T=0$.

Since $3T=0$, there is a function $\varphi$ with
\[
        \divisor(\varphi)=3D-6\infty.
\]
Thus $\varphi\in L(6\infty)=\langle 1,x,x^2,x^3,y\rangle$.  After scaling,
we may write
\[
        \varphi=Ay-G(x),
\]
where $G$ is monic of degree $3$.  We must have $A\ne0$; otherwise the zero
divisor of $\varphi$ is invariant under the hyperelliptic involution, which
would imply $D\sim 2\infty$ and hence $T=0$. Let \(H\) be the monic quadratic polynomial whose roots are the \(x\)-coordinates of the points in support of \(D\) (if $D=2P$ with $P=(x_P,y_P)$, we take $H=(x-x_P)^2$) then
\[
        (Ay-G)(Ay+G)=A^2f-G^2=-H^3,
\]
and therefore \eqref{eq:usual-param} holds.

Conversely, suppose that $(A,G,H)$ satisfies \eqref{eq:usual-param}.  Let $D$
be the divisor cut out by $H(x)=0$ and $Ay=G(x)$.  Since $f$ is square-free,
$G$ and $H$ are coprime.  The identity
\[
        (Ay-G)(Ay+G)=-H^3
\]
then shows that $Ay-G$ vanishes along $D$ with multiplicity $3$, and since
$G$ has a pole of order $6$ at infinity while $Ay$ has a pole of order $5$, we
obtain
\[
        \divisor(Ay-G)=3D-6\infty.
\]
Thus $[D-2\infty]$ is a $3$-torsion point.  It is non-zero. Indeed, if
$D\sim2\infty$, then $D$ is a fibre of the hyperelliptic map. Since $D$ is also cut out by $H(x)=0$ and $Ay=G(x)$, this would force $G$ and $H$ to have a common zero, and hence would force $f$ to have a double zero.
\end{proof}

Starting from polynomials $G$ and $H$ and a scalar $A$, if we define $f=(G^2-H^3)/A^2$, we obtain a genus two curve with a marked rational 3-torsion point. We now give a parametrisation of pairs $(G,H)$ for which the polynomial $f$ is monic and of degree 5 and which is suitable for lattice point counting. We consider the weighted projective space $\PP(1,2,3,4)$, with weighted coordinates $(A,B,J,E)$:
\[
        \wt(A)=1,\qquad \wt(B)=2,\qquad \wt(J)=3,\qquad \wt(E)=4.
\]
Set $P = (A,B,J,E)$ and $u=x+B/3$. We define 
\begin{align}
G_P(x) &= u^3+AJu+\frac{A^2E}{3}, \label{eq:G-u}\\
H_P(x) &= u^2-\frac{A^2}{3}u+
        \frac{-A^4+5A^2B+6AJ}{9}. \label{eq:H-u}
\end{align}
For $A\ne0$ set
\begin{equation}\label{eq:F-def}
        F_P(x)=\frac{G_P(x)^2-H_P(x)^3}{A^2}.
\end{equation}
We write
\begin{equation}\label{eq:F-theta}
        729F_P(x)=729x^5+
        \Theta_3(P)x^3+\Theta_2(P)x^2+\Theta_1(P)x+\Theta_0(P),
\end{equation}
where
\begin{align}
\Theta_3 &= -27(5A^4-30A^2B-36AJ+30B^2-18E), \label{eq:theta3}\\
\Theta_2 &= 27(6A^3J+5A^2B^2-24ABJ-20B^3+18BE-9J^2), \label{eq:theta2}\\
\Theta_1 &= 9(A^8-10A^6B-12A^5J+30A^4B^2+72A^3BJ-20A^2B^3 \notag \\
&\hspace{1.0cm}+36A^2J^2-84AB^2J+54AEJ-15B^4+18B^2E-18BJ^2), \label{eq:theta1}\\
\Theta_0 &= A^{10}-12A^8B-18A^7J+45A^6B^2+144A^5BJ-40A^4B^3 \notag \\
&\hspace{1.0cm}+108A^4J^2-252A^3B^2J-45A^2B^4-432A^2BJ^2+81A^2E^2 \notag \\
&\hspace{1.0cm}-144AB^3J+162ABEJ-216AJ^3-12B^5+18B^3E-27B^2J^2 .\label{eq:theta0}
\end{align}
The polynomials $\Theta_3,\Theta_2,\Theta_1,\Theta_0$ are weighted homogeneous
of weights $4,6,8,10$, respectively.  Although \eqref{eq:F-def} is written as
a quotient for $A\ne0$, the coefficient formula \eqref{eq:F-theta} defines
$F_P$ for every $P\in K^4$ when $729$ is invertible in $K$.

If $A\ne0$ and $F_P$ is square-free, then $C_P:y^2=F_P(x)$ is a smooth genus
$2$ curve.  Let $D_P$ be the effective divisor of degree $2$ cut out by
\[
        H_P(x)=0,\qquad Ay=G_P(x),
\]
and put
\[
        T_P=[D_P-2\infty]\in\Jac(C_P).
\]

\begin{proposition}\label{param}\label{prop:field-param}
Let $K$ be a field of characteristic different from $2$ and $3$.  Let
\[
        C_f:y^2=f(x)=x^5+a_3x^3+a_2x^2+a_1x+a_0
\]
be smooth over $K$, and let $0\ne T\in\Jac(C_f)(K)[3]$.  Then there is a
unique tuple $P=(A,B,J,E)\in K^4$, with $A\ne0$, such that
\[
        f=F_P,\qquad T=T_P.
\]
Conversely, if $A\ne0$ and $F_P$ is square-free, then $T_P$ is a non-zero point
of order $3$ on $\Jac(C_P)$.
\end{proposition}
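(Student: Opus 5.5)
The plan is to reduce everything to Proposition \ref{prop:usualparam}, which puts nonzero $T\in\Jac(C_f)(K)[3]$ in bijection with triples $(A,G,H)$ satisfying $G^2-A^2f=H^3$ with $A\neq0$, $G$ monic cubic and $H$ monic quadratic. It then suffices to show that the map $P=(A,B,J,E)\mapsto (A,G_P,H_P,F_P)$ is a bijection from $\{P\in K^4: A\neq 0\}$ onto the set of triples $(A,G,H)$ for which $f=(G^2-H^3)/A^2$ is monic of degree $5$ with no $x^4$ term.

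\textbf{Converse direction.} Suppose $A\neq0$ and $F_P$ is square-free. By definition $G_P^2-A^2F_P=H_P^3$, and by \eqref{eq:F-theta} $F_P$ is monic of degree $5$ with no $x^4$ term. Moreover $G_P$ and $H_P$ are monic of degrees $3$ and $2$. The converse half of Proposition \ref{prop:usualparam} then shows that $T_P$ is a nonzero $3$-torsion point.

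\textbf{Forward direction: conditions on $(G,H)$.} Let $(A,G,H)$ come from $(f,T)$ via Proposition \ref{prop:usualparam}. Write $G=x^3+g_2x^2+g_1x+g_0$ and $H=x^2+h_1x+h_0$. The condition $\deg(G^2-H^3)\le 5$ gives $2g_2=3h_1$. The condition that the $x^5$ coefficient equals $A^2$ and the $x^4$ coefficient vanishes gives two more equations.

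\textbf{Forward direction: solving for $B,J,E$.} Shift to $u=x+B/3$ with $B$ chosen so that $G$ has no $u^2$ term; this amounts to $B=g_2\cdot 3/3$ up to normalisation, so $B$ is determined linearly, using $3\in K^\times$. Then $H$ has no $u$-term before the correction, and the $x^5$-coefficient condition forces the $u$-coefficient of $H$ to be $-A^2/3$. This matches \eqref{eq:H-u}, and I would verify it by expanding $G^2-H^3$ in $u$. With $G=u^3+g_1'u+g_0'$ and $H=u^2+h_1'u+h_0'$:
\begin{itemize}
\item the coefficient of $u^5$ gives $h_1'=-A^2/3$;
\item the coefficient of $u^4$, combined with the requirement that $f$ have no $x^4$ term, links $g_1'$ and $h_0'$ linearly.
\end{itemize}
Since $A\neq0$, we can then solve uniquely $J=g_1'/A$ and $E=3g_0'/A^2$. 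Finally $h_0'$ must equal $(-A^4+5A^2B+6AJ)/9$, which should be exactly the $u^4$ relation rewritten after expressing $f$'s vanishing $x^4$ coefficient in terms of $B$. Each coordinate of $P$ is obtained linearly from the coefficients of $(G,H)$ once $A$ is fixed.

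This gives existence. Uniqueness follows because Proposition \ref{prop:usualparam} makes $(A,G,H)$ unique given $T$, up to the sign of $A$. Replacing $\varphi$ by a scalar multiple is excluded by monicity of $G$, so $A$ itself is determined.

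\textbf{Main obstacle.} The step I expect to be hardest is the bookkeeping in the coefficient matching: confirming that the three constraints (degree $\le5$, leading coefficient $A^2$, vanishing $x^4$ coefficient) cut out exactly the family \eqref{eq:G-u}--\eqref{eq:H-u}. In particular the $x^4$ coefficient condition must produce precisely the constant term $(-A^4+5A^2B+6AJ)/9$. I would do this by direct expansion in $u$, tracking that the $x^4$ coefficient of $f$ equals the $u^4$ coefficient minus $5\cdot B/3$ times the $u^5$ coefficient.
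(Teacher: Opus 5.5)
Your proposal is correct and follows essentially the same route as the paper: reduce to Proposition \ref{prop:usualparam}, take $B$ to be the $x^2$-coefficient of $G$, define $J$ and $E$ from the coefficients of $G$ in $u=x+B/3$, read off $H=H_P$ from the $u^5$ and $u^4$ coefficients of $G^2-A^2f=H^3$, and get uniqueness from the bijectivity of Proposition \ref{prop:usualparam} together with recovering $P$ from $(A,G,H)$. Two slips need fixing: $\deg(G^2-H^3)\le 5$ holds automatically and the $x^5$-coefficient condition is $2g_2-3h_1=A^2$, so $2g_2=3h_1$ and ``$H$ has no $u$-term'' are false (they contradict your own correct $h_1'=-A^2/3$); and the $x^4$-coefficient of $f$ equals the $u^4$-coefficient \emph{plus} (not minus) $\tfrac{5B}{3}$ times the $u^5$-coefficient, which is exactly what gives $h_0'=(-A^4+5A^2B+6AJ)/9$.
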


\begin{proof}
By Proposition \ref{prop:usualparam}, the pair $(f,T)$ determines a unique
triple $(A,G,H)$ with
\[
        G(x)^2-A^2f(x)=H(x)^3,
\]
where $A\ne0$, $G$ is monic cubic, and $H$ is monic quadratic.  Write
\[
        G(x)=x^3+Bx^2+g_1x+g_0.
\]
With $u=x+B/3$, we have
\[
        G(x)=u^3+\alpha u+\beta,
\]
where
\[
        \alpha=g_1-\frac{B^2}{3},
        \qquad
        \beta=g_0-\frac{Bg_1}{3}+\frac{2B^3}{27}.
\]
Define
\[
        J=\frac{\alpha}{A},\qquad E=\frac{3\beta}{A^2}.
\]
Then $G=G_P$.  Comparing the coefficients of $x^5$ and $x^4$ in
$G^2-A^2f=H^3$ gives
\[
        H(x)=u^2-\frac{A^2}{3}u+
        \frac{-A^4+5A^2B+6AJ}{9}=H_P(x).
\]
Hence $f=F_P$, and the marked torsion point is exactly $T_P$.  This proves
existence.  The same formulae recover $A,B,J,E$ from the unique triple
$(A,G,H)$, so the tuple $P$ is unique.

Conversely, if $A\ne0$ and $F_P$ is square-free, then
\[
        G_P(x)^2-A^2F_P(x)=H_P(x)^3.
\]
Proposition \ref{prop:usualparam} therefore gives a non-zero point
$T_P\in\Jac(C_P)(K)[3]$.
\end{proof}

\section{A compact subset of \texorpdfstring{$\R^4$}{R4}}\label{sec:height}
For $X \in \R_{\ge 0}$ we define 
\[
        \calR(X)=\{P\in\R^4:H_\Theta(P)\le X\}
\]
where for any  $P\in\R^4$,
\[
        H_\Theta(P)=H(F_P)
        =\max_{0 \le i\le 3} |\Theta_i(P)/729|^{1/(10-2i)}.
\]

\begin{lemma}\label{lem:no-cusp}
The set $\calR(1)$ is compact.
\end{lemma}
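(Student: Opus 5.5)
Closedness of $\calR(1)$ is immediate, since $H_\Theta$ is continuous on $\R^4$. The content of the lemma is boundedness. The plan is to reduce boundedness to the statement that the $\Theta_i$ have no common real zero other than the origin, and then exploit weighted homogeneity.

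The scaling argument goes as follows. For $\lambda>0$ write $\lambda\cdot(A,B,J,E)=(\lambda A,\lambda^2B,\lambda^3J,\lambda^4E)$. Since $\Theta_i$ is weighted homogeneous of weight $10-2i$, we get $H_\Theta(\lambda\cdot P)=\lambda H_\Theta(P)$. Consider the weighted unit sphere
\[
S=\{P\in\R^4: |A|^{12}+|B|^{6}+|J|^{4}+|E|^{3}=1\},
\]
which is compact and meets every $\R_{>0}$-orbit of a nonzero point exactly once. Suppose $H_\Theta$ does not vanish on $S$. Then it attains a minimum $m>0$ there. Every $P\neq0$ can be written as $\lambda\cdot Q$ with $Q\in S$, and then $H_\Theta(P)=\lambda H_\Theta(Q)\ge \lambda m$. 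So $H_\Theta(P)\le1$ forces $\lambda\le 1/m$, which bounds each coordinate of $P$. Hence $\calR(1)$ is bounded, and the lemma reduces to showing that
\[
\Theta_0(P)=\Theta_1(P)=\Theta_2(P)=\Theta_3(P)=0
\]
has no solution in $\R^4$ (indeed in $\C^4$) besides $P=0$. This is precisely the base-point-freeness of the map $\PP(1,2,3,4)\to\PP(4,6,8,10)$ claimed in the introduction.

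For the nonvanishing I would use a Gröbner basis / radical computation, as in the appendix. The aim is to show that the ideal $I=(\Theta_0,\dots,\Theta_3)\subset\Q[A,B,J,E]$ has radical $(A,B,J,E)$. Concretely, I would exhibit $N$ such that $A^N,B^N,J^N,E^N\in I$, with certificates of the form $c\,A^N=\sum_i h_i\Theta_i$ for $c\in\Z\setminus\{0\}$. These certificates are also the natural input for the later local analysis in Section \ref{sec:local}.

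It is worth checking by hand that the structure makes this plausible. On $A=0$ the polynomials become
\[
\Theta_3\propto 30B^2-18E,\qquad \Theta_2\propto -20B^3+18BE-9J^2,
\]
together with $\Theta_1$ and $\Theta_0$. From $\Theta_3=0$ we get $E=\tfrac53B^2$, so $\Theta_2=0$ gives $10B^3=9J^2$. Then
\[
\Theta_1\propto -15B^4+30B^4-18BJ^2=15B^4-20B^4=-5B^4,
\]
which forces $B=0$, hence $J=0$ and $E=0$. So the locus $A=0$ contributes no common zeros. This is exactly the "cusp" behaviour the compactification was designed to avoid. The case $A\ne0$ is then handled by the elimination, or alternatively via $F_P=(G_P^2-H_P^3)/A^2$: the vanishing of $F_P-x^5$ would force $G_P^2-H_P^3=A^2x^5$.

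The main obstacle is the $A\neq0$ branch, that is, ruling out nonzero common zeros away from the hyperplane $A=0$. I would first try to argue it directly. Since $F_P=x^5$ there, we get $G^2-A^2x^5=H^3$. Comparing multiplicities at $x=0$ and using that $G$ and $H$ are monic of degrees $3$ and $2$, this should force $G=x^3$ and $H=x^2$ up to the shift $u$, and then $A=0$, a contradiction. If the multiplicity argument runs into degenerate subcases, I would fall back on the machine Gröbner computation described above.
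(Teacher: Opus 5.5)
Your proposal is correct and is essentially the paper's proof: closedness from continuity, then the scaling $H_\Theta(\lambda\cdot P)=\lambda H_\Theta(P)$ over a compact weighted unit sphere. The paper normalises with $\max\{|A|,|B|^{1/2},|J|^{1/3},|E|^{1/4}\}=1$ rather than your sum of powers, which makes no difference. It then uses the same hand computation on $A=0$ ending in $-5B^4=0$, and a Gröbner basis computation for $A\neq0$, where it dehomogenises to $A=1$ and finds $1$ in the ideal.

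Your optional direct argument for $A\neq0$ also works, but comparing multiplicities at $x=0$ only helps once you know $G(0)=H(0)=0$, and that needs an extra input such as Mason--Stothers. Any common root of $G,H$ is a root of $A^2x^5$, and coprime $G,H$ would give $6=\deg G^2\le\deg\operatorname{rad}(G^2H^3x^5)-1\le5$. After that the descent gives $H=x^2$ and $G=x^2g_1$ with $g_1^2=x(x+A^2)$, which forces $A=0$.
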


\begin{proof}
We first prove that $(0,0,0,0)$ is the only common zero of $\Theta_3,\Theta_2,\Theta_1,\Theta_0$ over $\C$. Let $(A,B,J,E)$ be a solution of $\Theta_0=\Theta_1=\Theta_2=\Theta_3=0$.  As these polynomials are weighted homogeneous, if $A\ne0$, we may assume $A=1$.  A Groebner basis computation (see Appendix \ref{app:no-cusp}) gives
\begin{equation}\label{eq:gb-nine-use}
        1\in
        (\Theta_3(1,B,J,E),\Theta_2(1,B,J,E),
        \Theta_1(1,B,J,E),\Theta_0(1,B,J,E))
        \subset \Q[B,J,E],
\end{equation}
and so there are no solutions with $A \ne 0$. Suppose next $A=0$. The equations
$\Theta_3=\Theta_2=0$ imply
\[
        E={\frac{5}{3}}B^2,
        \qquad
        10B^3-9J^2=0.
\]
Substituting these relations into $\Theta_1=0$ gives $-5B^4=0$.  Hence
$B=J=E=0$. Thus the only common zero of the four forms is the origin.

Now put
\[
        \|P\|_w=\max\{|A|,|B|^{1/2},|J|^{1/3},|E|^{1/4}\}
\]
and  $S_w=\{P\in\R^4:\|P\|_w=1\}$.  This is a compact set on which the function $H_{\Theta}$ is strictly positive, and hence
$
        m:=\min_{P\in S_w} H_\Theta(P)>0.
$ For any non-zero $P\in\R^4$, write $\lambda=\|P\|_w$ and
$
        Q=(\lambda^{-1}A,\lambda^{-2}B,\lambda^{-3}J,\lambda^{-4}E)
$. Then $Q\in S_w$, and weighted homogeneity gives
\[
        H_\Theta(P)=\lambda H_\Theta(Q)\ge m\lambda.
\]
Thus $\calR(1)\subset\{P:\|P\|_w\le m^{-1}\}$.  Since $\calR(1)$ is closed,
it is compact.
\end{proof}

\begin{lemma}\label{lem:lattice}
Let $v+\Lambda\subset\Q^4$ be an affine lattice of full rank.  Then
\[
        \#(\calR(X)\cap(v+\Lambda))
        =\frac{\Vol(\calR(1))}{\operatorname{covol}(\Lambda)}X^{10}+O_\Lambda(X^9).
\]
If $Z\subset\A^4$ is a proper algebraic subvariety, then
\[
        \#(\calR(X)\cap(v+\Lambda)\cap Z(\Q))=O_{Z,\Lambda}(X^9).
\]
\end{lemma}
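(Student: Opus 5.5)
The plan is to reduce to a standard lattice-point count for semialgebraic sets, namely Davenport's lemma, after rescaling by the weighted action. The key observation is that weighted homogeneity gives $\calR(X)=X\cdot\calR(1)$, where $t\cdot(A,B,J,E)=(tA,t^2B,t^3J,t^4E)$. This holds because $H_\Theta(t\cdot P)=|t|H_\Theta(P)$, as in the proof of Lemma \ref{lem:no-cusp}. The determinant of this linear map is $X^{1+2+3+4}=X^{10}$, so $\Vol(\calR(X))=X^{10}\Vol(\calR(1))$. By Lemma \ref{lem:no-cusp}, $\calR(1)$ is compact and contained in the weighted box $\{\|P\|_w\le m^{-1}\}$. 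It is also semialgebraic: $H_\Theta(P)\le 1$ is equivalent to the finitely many polynomial inequalities $\Theta_i(P)^2\le 729^2$.

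For the first assertion I would apply Davenport's lemma. Let $\mathcal S$ be a bounded semialgebraic region in $\R^n$ defined by a bounded number of polynomial inequalities of bounded degree. Then $\#(\mathcal S\cap\Z^n)$ equals $\Vol(\mathcal S)$ up to an error bounded by the sum of the volumes of the projections of $\mathcal S$ onto all coordinate subspaces. To use it, write $v+\Lambda=M(\Z^4)+v$ with $M\in GL_4(\Q)$, and count points of $\Z^4$ in the region $M^{-1}(\calR(X)-v)$. This is still semialgebraic with uniformly bounded complexity, and its volume is $\Vol(\calR(1))X^{10}/\operatorname{covol}(\Lambda)$. For the projections, note that $\calR(X)$ lies in the box $|A|\ll X$, $|B|\ll X^2$, $|J|\ll X^3$, $|E|\ll X^4$. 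Any projection to a $k$-dimensional coordinate subspace (after the fixed linear change $M^{-1}$) therefore has volume $O_\Lambda(X^{2+3+4})=O(X^9)$ in the worst case $k=3$, with smaller bounds for smaller $k$. Some care is needed here, because $M^{-1}$ mixes coordinates, so the projections are not literally onto the weighted axes. To handle this, I would first replace $\Lambda$ by a sublattice of the form $N\Z^4$, writing $v+\Lambda$ as a finite union of translates $w+N\Z^4$. For such translates $M$ is diagonal and the projection bound is exactly as described. This step is where the $O_\Lambda$ dependence enters.

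For the second assertion, let $Z$ be proper, so $Z\subset\{g=0\}$ for some nonzero polynomial $g$. I would count lattice points with $|A|\ll X$, $|B|\ll X^2$, $|J|\ll X^3$ and $g(P)=0$. I would split according to the variable in which $g$ has positive degree; one may choose it to be $E$, or otherwise pick the coordinate appearing. For each fixed value of the remaining three coordinates, either $g$ restricted to that fiber is not identically zero, giving at most $\deg g$ solutions in the last coordinate, or the three fixed coordinates lie on the zero set of the leading coefficient. The first case contributes $O(X^{1+2+3})$ or at most $O(X^9)$, depending on which coordinate is freed; the worst case frees $A$, giving $O(\deg g\cdot X^{2+3+4})=O(X^9)$. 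The second case is handled by induction on dimension, using the same weighted box bounds, and also gives $O(X^9)$. This is the standard trivial bound for points on a hypersurface in a box.

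I expect the main, though mild, obstacle to be justifying the Davenport error term for a general affine lattice, that is, controlling projections after a non-diagonal linear change. The sublattice reduction above is the first thing I would try.
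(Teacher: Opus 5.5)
Your proposal is correct and follows the paper's argument: the weighted dilation $\calR(X)=X\cdot\calR(1)$ with determinant $X^{10}$, combined with Davenport's lemma for the compact semialgebraic set $\calR(1)$. Your sublattice reduction and your fibration bound for the subvariety case just fill in details the paper leaves implicit; the sublattice step is not even needed, since projecting any fixed linear image of the weighted box $X\times X^2\times X^3\times X^4$ onto a coordinate subspace already has volume $O(X^9)$.
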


\begin{proof}
Weighted homogeneity gives
\[
        \calR(X)=\{(XA,X^2B,X^3J,X^4E):(A,B,J,E)\in\calR(1)\}.
\]
The determinant of this dilation is $X^{1+2+3+4}=X^{10}$.  Since $\calR(1)$ is
compact and semi-algebraic, Davenport's Lipschitz principle \cite{davenport1951principle} gives the lattice
point estimate.  The subvariety estimate is an easy consequence of this.
\end{proof}

\section{Local conditions}\label{sec:local}

Define 
\[
        c_i(P)=\Theta_i(P)/729 \qquad (i=0,1,2,3),
\]
so that
\[
        F_P(x)=x^5+c_3(P)x^3+c_2(P)x^2+c_1(P)x+c_0(P).
\]
For a prime $p$ we say that $F_P$ is \textit{$p$-integral} if all four
coefficients $c_i(P)$ lie in $\Z_p$. When $F_P$ is $p$-integral, we 
say that it is \textit{not $p$-minimal} if
\[
        p^4\mid c_3(P),\qquad
        p^6\mid c_2(P),\qquad
        p^8\mid c_1(P),\qquad
        p^{10}\mid c_0(P),
\]
and \textit{$p$-minimal} otherwise.  
\begin{proposition}\label{prop:bad-primes}
Let $p$ be a prime. The set $N_p$ of tuples \(P\in\Q_p^4\) for which \(F_P\) is \(p\)-integral and
\(p\)-minimal is a compact subset of $\Q_p^4$. Moreover, $N_p$ is described by a finite set of congruence conditions, and 
there exists a constant $M_p$ such that $p^{M_p} N_p \subset \Z_p^4$.  
\end{proposition}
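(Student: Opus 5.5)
Since $\Theta_i$ has weight $10-2i$, the key tool is the $p$-adic analogue of Lemma \ref{lem:no-cusp}. Put
\[
\|P\|_{w,p}=\max\{|A|_p,|B|_p^{1/2},|J|_p^{1/3},|E|_p^{1/4}\},\qquad H_p(P)=\max_{0\le i\le 3}|c_i(P)|_p^{1/(10-2i)},
\]
where $c_i=\Theta_i/729$. Then $p$-integrality of $F_P$ means exactly $H_p(P)\le 1$. The plan is to prove a lower bound $H_p(P)\ge m_p\|P\|_{w,p}$ with some constant $m_p>0$. Once we have it, $N_p\subseteq\{H_p\le 1\}\subseteq\{\|P\|_{w,p}\le m_p^{-1}\}$, which is a bounded set.

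To get the lower bound, I would use the Nullstellensatz in effective form instead of a compactness argument. Lemma \ref{lem:no-cusp} shows that the origin is the only common zero of the $\Theta_i$ over $\C$. So for each variable $Y\in\{A,B,J,E\}$ of weight $w_Y$ there is an $N$ with
\[
Y^{N}=\sum_i g_{Y,i}\,\Theta_i,\qquad g_{Y,i}\in\Q[A,B,J,E],
\]
and we may take each $g_{Y,i}$ weighted homogeneous of weight $Nw_Y-(10-2i)$. Clearing denominators gives an integer $D_Y$ with $D_YY^N\in(\Theta_0,\dots,\Theta_3)\Z[A,B,J,E]$. These identities come out of the Groebner basis computation already used in the appendix, in the spirit of \eqref{eq:gb-nine-use}. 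Now apply the ultrametric inequality to a monomial $M$ of weight $k$: it satisfies $|M(P)|_p\le\|P\|_{w,p}^k$. Each term $g_{Y,i}\Theta_i$ is therefore bounded by $\|P\|_{w,p}^{Nw_Y-(10-2i)}\,H_p(P)^{10-2i}$, up to the factor $|729|_p^{-1}$. Write $\lambda=\|P\|_{w,p}$ and choose $Y$ with $|Y|_p=\lambda^{w_Y}$. We get $\lambda^{Nw_Y}\le C_p\max_i\lambda^{Nw_Y-(10-2i)}H_p^{10-2i}$, which forces $\lambda\le C_p' H_p(P)$. For $p\ge5$ I expect the $D_Y$ and the factor $729$ to be $p$-adic units, so $m_p=1$ there. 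The real content is at $p=2,3$, and at any primes dividing the $D_Y$.

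Compactness and the congruence description come next. The conditions $c_i(P)\in\Z_p$ are closed, since each $c_i$ is continuous and $\Z_p$ is closed. The $p$-minimality condition is also closed: it is the complement of the open set where all four divisibilities $p^{10-2i}\mid c_i(P)$ hold. Hence $N_p$ is closed and bounded, so compact. Boundedness also gives $p^{M_p}N_p\subseteq\Z_p^4$ for $M_p$ with $p^{-M_p}\ge m_p^{-1}$; take $M_p=\lceil\log_p m_p^{-1}\rceil$, which is $0$ for $p\ge5$ if the above holds. For the congruence conditions, write $P=p^{-M_p}Q$ with $Q\in\Z_p^4$. Then each $c_i(P)$ is a fixed power of $p$ times $\Theta_i(Q)/729$, where $\Theta_i(Q)$ has integer coefficients. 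Integrality and non-minimality then depend only on $Q$ modulo $p^{K}$ for $K=M_p\cdot 10+10+v_p(729)$. So $N_p$ is a finite union of cosets of $p^{K-M_p}\Z_p^4$, i.e. it is cut out by finitely many congruence conditions.

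The main obstacle is making the Nullstellensatz step explicit, namely exhibiting the identities $D_YY^N\in(\Theta_i)$ with integer data. Over $\C$ this is only existential, and the $A=0$ case of Lemma \ref{lem:no-cusp} was handled by hand. I would first try an elimination or Groebner basis over $\Z$ (or over $\Z_{(p)}$ for $p=2,3$) to find explicit $N$ and $D_Y$, and read off $M_2$ and $M_3$. If that is too expensive, a fallback is a compactness argument that needs no explicit certificate. The set $\{\|P\|_{w,p}=1\}$ is compact, and on it $H_p$ is continuous and nonvanishing, because the $\Theta_i$ have no common nonzero zero over $\overline{\Q}_p$. So $H_p$ attains a positive minimum $m_p$ there. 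Every $P\neq0$ can be rescaled into this set by a weighted action of a power of $p$, with the adjustment that $\lambda$ now ranges over $p^{\Z/12}$; this is handled by using the finitely many residue shells $\{\lambda\in p^{-1/12\cdot j+\Z}\}$, each compact after weighted scaling. This argument gives existence of $M_p$ at every prime, and $M_p=0$ for $p\ge5$ would then follow from the explicit good-reduction check.
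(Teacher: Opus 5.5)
Your proposal is correct and follows essentially the paper's route. The paper likewise clears denominators in Nullstellensatz identities $C_X X^{N_X}=\sum_i R_{X,i}\Theta_i$ over $\Z$ and uses the ultrametric inequality to bound the denominators of $P$, phrasing it via the minimal $n$ with $(p^nA,p^{2n}B,p^{3n}J,p^{4n}E)\in\Z_p^4$ rather than your inequality $H_p(P)\ge m_p\|P\|_{w,p}$, and then reduces integrality and minimality to congruences modulo a fixed power of $p$ after a weighted rescaling. Two small corrections: read your $P=p^{-M_p}Q$ as the weighted rescaling $(p^{-M_p}a,p^{-2M_p}b,p^{-3M_p}j,p^{-4M_p}e)$ (otherwise $c_i(P)$ is not a fixed power of $p$ times $\Theta_i(Q)/729$), and the condition on $M_p$ should read $p^{M_p}\ge m_p^{-1}$, not $p^{-M_p}\ge m_p^{-1}$.
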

\begin{proof}
 We first show that there is an effectively
computable integer \(M_p\) such that for any $P \in \Q_p^4$, $F_P \in \Z_p[x]$ only if $P \in p^{-M_p} \Z_p^4$. A \texttt{MAGMA} computation (see Appendix \ref{app:no-cusp}) shows that the common zero locus of the four
\(\Theta_i\) over \(\overline{\Q}\) is the origin $(0,0,0,0)$.  Hence, for each
coordinate \(X\in\{A,B,J,E\}\), some power of \(X\) belongs to the ideal
generated by  \(\Theta_i\) over \(\Q\).  We have
\begin{equation}\label{eq:nullstellensatz-coord}
        C_X X^{N_X}
        =
        R_{X,3}\Theta_3+R_{X,2}\Theta_2+R_{X,1}\Theta_1+R_{X,0}\Theta_0,
\end{equation}
with \(C_X\in\Z_{\ne0}\), \(N_X>0\), and
\(R_{X,i}\in\Z[A,B,J,E]\).  These identities are effectively computable
from a Groebner basis.

Now take \(P=(A,B,J,E)\in\Q_p^4\), and let $n$ be the smallest integer
such that 
$$
        Q=(p^nA,\ p^{2n}B,\ p^{3n}J,\ p^{4n}E) \in \Z_p^4.
$$

Since \(Q\in\Z_p^4\), the values \(R_{X,i}(Q)\) are \(p\)-adic integers.  The identity
\eqref{eq:nullstellensatz-coord}  implies that if all the values
\(\Theta_i(Q)\) had sufficiently large \(p\)-adic valuation, then each coordinate of
$Q$ would be divisible by the corresponding weighted power of $p$, but this contradicts the  the definition of $n$ . It follows that there exists a constant \(K_p\), independent of $P$, such
that, for every such \(Q\),
\begin{equation}\label{eq:theta-valuation-bound-simple}
        \min_i v_p(\Theta_i(Q))\le K_p .
\end{equation}

Suppose now that \(F_P \in \Z_p[x]\) is integral.  As each \(\Theta_i\) is
weighted homogeneous of degree $d_i=10-2i$, we have
\[
        \Theta_i(Q)=p^{nd_i}\Theta_i(P).
\]
Also \(c_i=\Theta_i/729\), so \(c_i(P)\in\Z_p\) gives
\[
        v_p(\Theta_i(P))\ge v_p(729).
\]
Thus
\[
        v_p(\Theta_i(Q))\ge nd_i+v_p(729)
        \qquad\text{for all }i.
\]
Combining this with \eqref{eq:theta-valuation-bound-simple}, and using
\(d_i\ge4\), gives
\[
        4n+v_p(729)\le K_p.
\]
Hence \(n\) is bounded above, and  we can take
\[
        M_p=\max\left\{0,\left\lceil
        \frac{K_p-v_p(729)}{4}
        \right\rceil\right\}.
\]

Write
\[
        P=(p^{-M_p}a,\ p^{-2M_p}b,\ p^{-3M_p}j,\ p^{-4M_p}e),
        \qquad (a,b,j,e)\in\Z_p^4.
\]
For each \(i\), choose \(e_{p,i}\ge0\) minimal such that
\[
        \Psi_{p,i}(a,b,j,e)
        :=
        p^{e_{p,i}}c_i(p^{-M_p}a,p^{-2M_p}b,p^{-3M_p}j,p^{-4M_p}e)
\]
belongs to \(\Z_p[a,b,j,e]\).  Then
\[
        c_i(P)\in\Z_p
        \quad\Longleftrightarrow\quad
        \Psi_{p,i}(a,b,j,e)\equiv0\pmod {p^{e_{p,i}}}.
\]
Moreover \(F_P\) is  not \(p\)-minimal exactly when
\[
\begin{aligned}
        \Psi_{p,3}(a,b,j,e)&\equiv0\pmod {p^{e_{p,3}+4}},&
        \Psi_{p,2}(a,b,j,e)&\equiv0\pmod {p^{e_{p,2}+6}},\\
        \Psi_{p,1}(a,b,j,e)&\equiv0\pmod {p^{e_{p,1}+8}},&
        \Psi_{p,0}(a,b,j,e)&\equiv0\pmod {p^{e_{p,0}+10}}.
\end{aligned}
\]
Therefore \(p\)-integrality and \(p\)-minimality are determined by the
residue class of \((a,b,j,e)\) modulo $p^{L_p}$, where $L_p=\max\{e_{p,3}+4,e_{p,2}+6,e_{p,1}+8,e_{p,0}+10\}.$ 
\end{proof}

For $p \geq 5$ these congruence conditions take a simple form.
\begin{proposition}\label{prop:good-primes}
Let $p\ge5$.  For $P\in\Z_p^4$, the polynomial $F_P$ is $p$-integral. The polynomial $F_P$ is not $p$-minimal if and only if
\begin{equation}\label{eq:good-nonminimal}
        p\mid A,
        \qquad p^2\mid B,
        \qquad p^3\mid J,
        \qquad p^4\mid E.
\end{equation}
The density of the subset of $\Z_p^4$ of points $P$  for which $F_P$ is $p$-minimal is
$1-p^{-10}$.
\end{proposition}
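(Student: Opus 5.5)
Integrality is immediate. For $p\ge5$, $729$ is a $p$-adic unit and the $\Theta_i$ have integer coefficients, so $c_i(P)=\Theta_i(P)/729\in\Z_p$ whenever $P\in\Z_p^4$.

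The "if" direction of the non-minimality criterion follows from weighted homogeneity. If $P=(pA',p^2B',p^3J',p^4E')$ with $(A',B',J',E')\in\Z_p^4$, then $c_i(P)=p^{10-2i}c_i(P')$, and $c_i(P')\in\Z_p$ by the integrality just shown. So $F_P$ is not $p$-minimal.

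For the "only if" direction, suppose $p^{10-2i}\mid c_i(P)$ for all $i$, and argue in two steps.

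\emph{Step 1: $p\mid A$.} Reduce modulo $p$. All $\Theta_i(\bar P)$ vanish in $\mathbb F_p$, so $\bar P$ is a common zero of the $\bar\Theta_i$ over $\overline{\mathbb F}_p$, and I want to show $\bar P=0$. If $\bar A\ne0$, rescale to $\bar A=1$. The Groebner basis identity \eqref{eq:gb-nine-use} writes $C=\sum R_i\Theta_i(1,B,J,E)$ with $C\in\Z_{\neq0}$ and $R_i\in\Z[B,J,E]$ after clearing denominators. This excludes $\bar A\neq 0$ provided $p\nmid C$. If $\bar A=0$, the elimination in Lemma~\ref{lem:no-cusp} only divides by $2,3,5$, giving $E=\tfrac53B^2$, $10B^3=9J^2$, and then $-5B^4=0$. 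This handles $p\ge7$ and forces $\bar B=\bar J=\bar E=0$. So for all $p$ outside an explicit finite set (the primes dividing $C$, together with $5$, where $-5B^4=0$ is vacuous) we get $\bar P=0$, hence $p\mid A,B,J,E$. The main obstacle is to control these exceptional primes, in particular $p=5$ and the primes dividing $C$. For these I would redo the $\bar A=0$ elimination with the full $\Theta_0$, or compute a Groebner basis over $\Z$ (Appendix) to show that the only common zero over $\mathbb F_p$ is the origin for every $p\ge5$. Equivalently, I would check that the ideal $(\Theta_i)+(A,B,J,E)^N$-saturation contains $A^{N_A},\dots$ with constants $C_X$ supported only on $\{2,3\}$.

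\emph{Step 2: upgrade $p\mid A,B,J,E$ to the weighted divisibilities.} Let $n\ge0$ be maximal with $P=(p^nA',p^{2n}B',p^{3n}J',p^{4n}E')$ and $P'\in\Z_p^4$. Suppose $n=0$, i.e.\ some weighted coordinate is not divisible. Step 1 gives $p\mid A$ and $\bar P'=0$, so $p\mid B,J,E$ as well. Now push the congruences one level further: write $A=pa$, $B=pb$, $J=pj$, $E=pe$, expand the $\Theta_i$, divide by the appropriate power of $p$, and reduce modulo $p$. This gives a lower-weight system that again forces vanishing, and iterating through the finitely many partial-valuation patterns reaches a contradiction. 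A cleaner alternative is to use the identities \eqref{eq:nullstellensatz-coord}: if $C_X$ is a $p$-unit and $\Theta_i(P)\equiv0 \bmod p^{d_i}$ with $d_i=10-2i$, then weighted homogeneity of $R_{X,i}$, with each monomial tracked by its weighted degree, shows directly that $v_p(X)\ge \wt(X)$. I would adopt this approach, with $\Theta_i$-combinations of the same weight as $X^{N_X}$, so that $N_Xv(X)\ge N_X\wt(X)$ follows from a valuation-by-weight induction.

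\emph{Density.} The non-minimal set is $p\Z_p\times p^2\Z_p\times p^3\Z_p\times p^4\Z_p$, which has Haar measure $p^{-(1+2+3+4)}=p^{-10}$. The minimal set therefore has density $1-p^{-10}$.
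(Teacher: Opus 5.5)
Your proposal is correct in outline, and it takes a genuinely different route for the key step.

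\textbf{Where you agree with the paper.} On integrality, the \emph{if} direction, the reduction mod $p$ and the density, your argument matches the paper's. Your exceptional primes are settled there exactly as you suggest. The appendix shows that $9$ lies in the dehomogenised ideal over $\Z$. At $A=0$, $p=5$, one gets $E=J=0$, and then $\Theta_0=-12B^5$ forces $B=0$.

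\textbf{Where you differ.} The difference is in upgrading $p\mid A,B,J,E$ to the weighted divisibilities.
\begin{itemize}
\item The paper peels off powers of $p$ by hand. It uses the leading terms $486e$ and $-243j^2$, then the pair $5b^2-3e_1\equiv0$, $-20b^3+18be_1\equiv0$, which forces $10b^3\equiv0$. It adds a separate four-congruence computation at $p=5$.
\item You instead use Nullstellensatz identities $C_XX^{N_X}=\sum_i R_{X,i}\Theta_i$, with $R_{X,i}$ weighted homogeneous and $p\nmid C_X$.
\end{itemize}

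\textbf{Two points need tightening.}

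First, your \emph{equivalently} is a real lemma, not a restatement. It follows from graded Nakayama. The ring $R=\Z_{(p)}[A,B,J,E]/(\Theta_i)$ has finitely generated graded pieces. The quotient $R/pR$ is Artinian, because the paper's Step~1 arguments work verbatim over $\overline{\mathbb F}_p$. Hence $R_n=0$ for $n\gg0$, which gives the identities with $p\nmid C_X$.

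Second, the valuation bound is not \emph{direct}. Bounding $v_p(R_{X,i}(P))$ below by the weight of $R_{X,i}$ presupposes the weighted divisibility you are trying to prove. Run it as an extremal argument instead:
\begin{itemize}
\item Put $\mu=\min_X v_p(X)/\wt(X)$. Then $v_p(R_{X,i}(P))\ge (N_X\wt(X)-d_i)\mu$.
\item Suppose $\mu<1$. Apply the identity at a minimising coordinate $X$, using $v_p(\Theta_i(P))\ge d_i\ge4$.
\item This gives $N_X\wt(X)\mu\ge N_X\wt(X)\mu+4(1-\mu)$, a contradiction.
\end{itemize}
Alternatively, rescale $P$ over a totally ramified extension of $\Q_p$ to weighted valuation $0$. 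Its reduction is a nonzero $\mathbb F_p$-point, so your Step~1 shows some $\Theta_i$ is a unit there.

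\textbf{What each route buys.} Your route makes the upgrade automatic once base-point-freeness mod $p$ is known. It handles $p=5$ only once, and it transfers unchanged to other weighted parametrisations. The paper's route is completely explicit and needs no commutative algebra beyond the displayed congruences.
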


\begin{proof}
The ``if" direction is clear.  Assume that $P$ is integral and that  $F_P$ is not $p$-minimal.  Reducing the four $\Theta_i$ modulo $p$ shows
first that
\[
        A\equiv B\equiv J\equiv E\equiv0\pmod p.
\]
If $A\ne0$, we can assume $A=1$ by weighted homogeneity.  The Groebner basis in
Appendix \ref{app:no-cusp} shows that the corresponding ideal contains $9$ over
$\Z$, so it has no zero modulo $p\ge5$.  If $A=0$, then the calculation in the
proof of Lemma \ref{lem:no-cusp} works modulo $p\ge7$; for $p=5$ the first
three equations give $E=0$, $J=0$, and then $\Theta_0=-12B^5$ gives $B=0$.

Write $A=pa$, $B=pb$, $J=pj$, and $E=pe$.  From $p^4\mid\Theta_3$ we get
\[
        \frac{\Theta_3(pa,pb,pj,pe)}{p}
        \equiv 486e\pmod p,
\]
so $E=p^2e_1$.  From $p^6\mid\Theta_2$ we get
\[
        \frac{\Theta_2(pa,pb,pj,p^2e_1)}{p^2}
        \equiv -243j^2\pmod p,
\]
so $J=p^2j_1$.

Now substitute $A=pa$, $B=pb$, $J=p^2j_1$, and $E=p^2e_1$.  The conditions
$p^4\mid\Theta_3$ and $p^6\mid\Theta_2$ give
\[
        5b^2-3e_1\equiv0,
        \qquad -20b^3+18be_1\equiv0\pmod p.
\]
For $p\ge7$ these imply $10b^3\equiv0\pmod p$, so $B=p^2b_1$ and
$E=p^3e_2$.  Substituting $A=pa$, $B=p^2b_1$, $J=p^2j_1$, $E=p^3e_2$ into
$\Theta_2$ gives
\[
        \frac{\Theta_2(pa,p^2b_1,p^2j_1,p^3e_2)}{p^4}
        \equiv -243j_1^2\pmod p,
\]
so $J=p^3j_2$.  Finally
\[
        \frac{\Theta_3(pa,p^2b_1,p^3j_2,p^3e_2)}{p^3}
        \equiv 486e_2\pmod p,
\]
so $E=p^4e_3$.

For $p=5$, the same argument gives $A=5a$, $B=5b$, $J=25j$, and $E=25e_1$.
Using $\Theta_3$ once more gives $e_1\equiv0\pmod5$, so $E=125e$.  With
$A=5a$, $B=5b$, $J=25j$, $E=125e$, and since $F_P$ is not $5$-minimal we get  the four
congruences
\[
\begin{aligned}
2aj-2b^2+e&\equiv 0 \bmod 5,\\
2abj+2b^3+be+2j^2&\equiv 0 \bmod 5,\\
-ab^2j-2b^4+2b^2e-2bj^2&\equiv 0 \bmod 5,\\
-2b^5&\equiv 0 \bmod 5.
\end{aligned}
\]
The last congruence gives $b\equiv0$, the second gives $j\equiv0$, and the
first gives $e\equiv0$. Thus \eqref{eq:good-nonminimal} holds also for
$p=5$.

The sub-lattice defined by \eqref{eq:good-nonminimal} has index $p^{1+2+3+4}=p^{10}$.
This gives the density $1-p^{-10}$ for minimality at $p$.
\end{proof}

\section{The sieve}\label{sec:count}

Proposition \ref{prop:bad-primes}, applied at $p=2$ and $p=3$, gives a finite
set of congruence conditions on $P=(A,B,J,E)\in\Q^4$ which characterize
integrality and minimality at these two primes.  We impose these conditions on
$\Z[1/6]^4$. The subset $  \mathcal L_{2,3}^{\min}$ of $\Z[1/6]^4$ consisting of tuples which are
integral and minimal at both $2$ and $3$ is covered by a set $I$ of affine
lattices 
\[
        \mathcal L_{2,3}^{\min}
        =
        \bigsqcup_{\nu\in I} (v_\nu+\Lambda_{2,3}),
\]
contained in $\Z[1/6]^4$, with
$\Lambda_{2,3}\subset \Q^4$ a lattice of full rank.  We define
\[
        \beta_{2,3}
        =
        \frac{\# I}{\operatorname{covol}(\Lambda_{2,3})}.
\]
Equivalently, by Lemma \ref{lem:lattice},
\begin{equation}\label{eq:bad-density}
        \#\bigl(\calR(X)\cap\mathcal L_{2,3}^{\min}\bigr)
        =
        \Vol(\calR(1))\,\beta_{2,3}X^{10}+O(X^9).
\end{equation}
The number $\beta_{2,3}$ is effectively computable from the congruence
conditions of Proposition \ref{prop:bad-primes}. This computation could be carried out in practice with enough persistence, but appears to be rather tedious.

For a prime $p\ge5$ put
\[
        p\cdot P=(pA,p^2B,p^3J,p^4E).
\]
Define
\[
\begin{aligned}
\calM(X)=\{P\in\calR(X)\cap\mathcal L_{2,3}^{\min}:&\ A\ne0,
\ \disc(F_P)\ne0, \\
&\ P\notin p\Z_p \times p^2 \Z_p\times  p^3 \Z_p  \times p^4\Z_p \text{ for every prime }p\ge5\}.
\end{aligned}
\]
The last condition says that, for every prime $p \geq 5$, the four
coordinates are not simultaneously divisible by $p$ with weights $1,2,3,4$.

\begin{proposition}\label{prop:bijection}
The map
\[
        P\longmapsto (F_P,T_P)
\]
is a bijection from $\calM(X)$ to $\calW^{\rm mark}(X)$.
\end{proposition}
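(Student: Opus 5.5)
We verify three things in turn: that the map lands in $\calW^{\rm mark}(X)$, that it is injective, and that it is surjective. Throughout, Proposition \ref{param} (over $K=\Q$) supplies the algebraic correspondence, and the local results of Section \ref{sec:local} supply the arithmetic conditions.

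\emph{Well-definedness.} Let $P\in\calM(X)$. Since $A\ne0$ and $\disc(F_P)\ne0$, the polynomial $F_P$ is square-free. The converse part of Proposition \ref{param} then shows that $T_P$ is a non-zero point of $\Jac(C_P)(\Q)[3]$. For integrality and minimality we argue prime by prime. At $p=2,3$ these hold because $P\in\mathcal L_{2,3}^{\min}$. At $p\ge5$ we have $P\in\Z[1/6]^4\subset\Z_p^4$, so Proposition \ref{prop:good-primes} gives $p$-integrality. The same proposition shows that $p$-minimality is equivalent to $P\notin p\Z_p\times p^2\Z_p\times p^3\Z_p\times p^4\Z_p$, which is exactly the last defining condition of $\calM(X)$. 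Since $\Z=\bigcap_p(\Z_p\cap\Q)$, it follows that $F_P\in\Z[x]$ and that $F_P$ is minimal. Finally, $H(F_P)=H_\Theta(P)\le X$ because $P\in\calR(X)$.

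\emph{Injectivity.} This is immediate from the uniqueness in Proposition \ref{param}: if $(F_P,T_P)=(F_{P'},T_{P'})$, then $P$ and $P'$ are both tuples with $A\ne0$ realising the same pair $(f,T)$, so $P=P'$.

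\emph{Surjectivity.} Take $(f,T)\in\calW^{\rm mark}(X)$. Proposition \ref{param} gives $P\in\Q^4$ with $A\ne0$, $f=F_P$ and $T=T_P$. We have $\disc(F_P)\ne0$ since $f$ is square-free, and $H_\Theta(P)=H(f)\le X$. It remains to show that $P\in\mathcal L_{2,3}^{\min}$ and that the condition at primes $p\ge5$ holds.
\begin{enumerate}
\item \textbf{Membership in $\Z[1/6]^4$.} We must show $P\in\Z_p^4$ for every $p\ge5$, i.e.\ that $M_p=0$ in the proof of Proposition \ref{prop:bad-primes}. Let $n\ge0$ be minimal with $Q=(p^nA,p^{2n}B,p^{3n}J,p^{4n}E)\in\Z_p^4$, and suppose $n\ge1$. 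Then $Q$ is a primitive weighted point, meaning it is not divisible by $p$ with weights $1,2,3,4$. On the other hand, $\Theta_i(Q)=p^{n(10-2i)}\Theta_i(P)$ and $\Theta_i(P)=729\,a_i\in\Z_p$, so $v_p(\Theta_i(Q))\ge 10-2i$ for each $i$. Hence $F_Q$ is $p$-integral and not $p$-minimal. By Proposition \ref{prop:good-primes}, which applies because $Q\in\Z_p^4$, this forces $Q$ to be weighted-divisible by $p$, a contradiction. So $n=0$ and $P\in\Z_p^4$.
\item \textbf{The condition at $p\ge5$.} Since $f$ is minimal, $F_P$ is $p$-minimal, so Proposition \ref{prop:good-primes} shows that $P$ is not weighted-divisible by $p$.
\item \textbf{The conditions at $2$ and $3$.} Integrality and minimality of $f$ at these primes mean that $P\in N_2\cap N_3$. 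Together with the previous two items, this gives $P\in\mathcal L_{2,3}^{\min}$ by its definition.
\end{enumerate}

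The main obstacle is the first item of the surjectivity argument, showing that $P$ has no denominators at $p\ge5$; everything else is bookkeeping. The rescaling argument above resolves it by combining weighted homogeneity with the reduction-mod-$p$ analysis already carried out in Proposition \ref{prop:good-primes}.
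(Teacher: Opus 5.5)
Your proof is correct and takes the same route as the paper, whose proof simply cites Propositions \ref{prop:field-param}, \ref{prop:bad-primes} and \ref{prop:good-primes}. Your rescaling argument that $P\in\Z_p^4$ for every $p\ge5$ (so that $P\in\Z[1/6]^4$) makes explicit a step the paper leaves implicit: Proposition \ref{prop:good-primes} is stated only for $P\in\Z_p^4$, and Proposition \ref{prop:bad-primes} by itself only bounds the denominators.
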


\begin{proof}
This follows from Proposition \ref{prop:field-param}, Proposition \ref{prop:bad-primes} and Proposition \ref{prop:good-primes}.
\end{proof}

\begin{remark}\label{rem:nonintegral}
The tuple $P$ need not be integral at the primes $2$ and $3$. For instance, 
\[
        P=\left(6,0,0,\frac{9}{2}\right)
\]
gives
\[
        F_P(x)=x^5-237x^3+20736x+83025,
\]
which is integral and minimal.
\end{remark}
Set
\[
\begin{aligned}
\calG(X)=\{P\in\calR(X)\cap\mathcal L_{2,3}^{\min}:&
P\notin p\cdot\Z[1/6]^4\text{ for every prime }p\ge5\}.
\end{aligned}
\]
Then
\begin{equation}\label{eq:groomed-count}
        \calM(X)=\calG(X)\setminus\bigl(\{A=0\}\cup\{\disc(F_P)=0\}\bigr),
\end{equation}
and Proposition \ref{prop:bijection} identifies $\calM(X)$ with
$\calW^{\rm mark}(X)$.

\begin{proposition}\label{prop:main-count}
As \(X\to\infty\),
\[
        \#\calG(X)=
        \Vol(\calR(1))\,\beta_{2,3}\prod_{p\ge5}(1-p^{-10})X^{10}
        +o(X^{10}).
\]
\end{proposition}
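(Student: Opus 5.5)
This is a standard Möbius sieve over the primes $p\ge5$, with a uniform tail estimate for large primes. For squarefree $m$ coprime to $6$, write $m\cdot P=(mA,m^2B,m^3J,m^4E)$, and let $\mathcal L(m)=\{P\in\mathcal L_{2,3}^{\min}: P\in m\cdot\Z[1/6]^4\}$. Since $m$ is a unit at $2$ and $3$, the condition $P\in m\cdot\Z[1/6]^4$ only affects the primes dividing $m$. On the other hand, membership in $\mathcal L_{2,3}^{\min}$ only involves $2$ and $3$, and by weighted homogeneity of the $\Theta_i$ it is preserved by the substitution $P\mapsto m\cdot P$ up to the units $m^{d_i}$ at $2,3$. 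It is cleaner to use the Chinese remainder theorem directly: $\mathcal L(m)$ is a finite union of translates of the lattice $\Lambda_{2,3}\cap m\cdot\Z^4$ (after clearing denominators at $2,3$). Its density relative to $\mathcal L_{2,3}^{\min}$ is exactly $m^{-10}$, since the index of $m\cdot\Z_p^4$ in $\Z_p^4$ is $p^{10}$ for each $p\mid m$. By Lemma \ref{lem:lattice} and \eqref{eq:bad-density},
\[
\#\bigl(\calR(X)\cap\mathcal L(m)\bigr)=\Vol(\calR(1))\,\beta_{2,3}\,m^{-10}X^{10}+O_m(X^9).
\]

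Fix a parameter $Y$ and split the primes $p\ge5$ into $p\le Y$ and $p>Y$. Let $\calG_Y(X)$ be the set of $P\in\calR(X)\cap\mathcal L_{2,3}^{\min}$ with $P\notin p\cdot\Z[1/6]^4$ for all $5\le p\le Y$. Inclusion–exclusion over the finitely many squarefree $m$ supported on $[5,Y]$ gives
\[
\#\calG_Y(X)=\Vol(\calR(1))\,\beta_{2,3}\prod_{5\le p\le Y}(1-p^{-10})X^{10}+O_Y(X^9).
\]
Moreover $\calG(X)\subset\calG_Y(X)$, and the difference is contained in $\bigcup_{p>Y}\bigl(\calR(X)\cap p\cdot\Z[1/6]^4\cap\mathcal L_{2,3}^{\min}\bigr)$.

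The main step is a bound on this tail that is uniform in $p$. Points of $\calR(X)\cap\mathcal L_{2,3}^{\min}$ lie in $6^{-M}\Z^4$ with $M=\max(M_2,M_3)$ by Proposition \ref{prop:bad-primes}. Being in $p\cdot\Z[1/6]^4$ means $P=p\cdot Q$ with $Q\in 6^{-M}\Z^4$ (again using that the scaling preserves the $2,3$-adic shape up to bounded denominators). Since $H_\Theta(p\cdot Q)=pH_\Theta(Q)$, we get $Q\in\calR(X/p)$. The set $\calR(1)$ is compact by Lemma \ref{lem:no-cusp}, so it lies in a weighted box $\|P\|_w\le m^{-1}$. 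Hence the number of such $Q$ is at most
\[
\ll \prod_{k=1}^4\bigl(1+(X/p)^k\bigr),
\]
which vanishes unless $p\ll X$ when $Q\neq0$. (The point $Q=0$ does not occur, since $0\notin\mathcal L_{2,3}^{\min}$.) For $p\le cX$ each factor is $\ll (X/p)^k$, so the product is $\ll X^{10}p^{-10}$. Summing over $p>Y$ gives $O(X^{10}Y^{-9})$.

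Combining, we obtain
\[
\limsup_{X\to\infty} \bigl|X^{-10}\#\calG(X)-\Vol(\calR(1))\,\beta_{2,3}\prod_{p\ge5}(1-p^{-10})\bigr|\ll Y^{-9}+\sum_{p>Y}p^{-10}.
\]
Letting $Y\to\infty$ proves the proposition. The one point needing care is that the uniform tail bound should use only the crude weighted box containing $\calR(1)$ rather than Lemma \ref{lem:lattice}, because the implied constants there depend on the lattice, and hence on $p$.
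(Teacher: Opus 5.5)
Your proposal is correct and follows essentially the same route as the paper. Both arguments combine a finite sieve over the primes $5\le p\le Y$, via CRT and Lemma~\ref{lem:lattice}, with a tail bound that uses the crude weighted-box count $\prod_{k=1}^4(1+(X/p)^k)\ll X^{10}p^{-10}$ summed over $p\ll X$. The only difference is how $p\ll X$ is enforced. The paper restricts to $A\neq0$ and discards $A=0$ separately as $O(X^9)$, whereas you write $P=p\cdot Q$ with $Q\neq0$ of bounded denominator in $\calR(X/p)$, which avoids that split.
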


\begin{proof}
Let \(S\) be a finite set of primes \(p\ge5\).  Since
\(\mathcal L_{2,3}^{\min}\) is a finite union of affine lattices whose
denominators are supported only at \(2\) and \(3\), reduction modulo powers of
primes in \(S\) behaves as for the standard lattice.  Proposition
\ref{prop:good-primes} and Lemma \ref{lem:lattice} give
\[
\begin{aligned}
&\#\{P\in\calR(X)\cap\mathcal L_{2,3}^{\min}:
        P\notin p\cdot\Z[1/6]^4\text{ for every }p\in S\}  \\
&\qquad =
        \Vol(\calR(1))\,\beta_{2,3}
        \prod_{p\in S}(1-p^{-10})X^{10}+O_S(X^9).
\end{aligned}
\]

It remains to pass from the finite sieve to the full sieve.  For $M\ge 5$, let
$N_M^{\rm tail}(X)$ be the number of points $P\in\calR(X)\cap
\mathcal L_{2,3}^{\min}$, with $A\ne0$, such that $F_P$ is non-minimal at some
prime $p>M$.  We will show that
\begin{equation}\label{eq:tail-sieve-bound}
        N_M^{\rm tail}(X)
        \ll
        X^{10}\sum_{p>M}p^{-10}+o(X^{10}),
\end{equation}
where the implied constant is independent of $M$ and $X$.

Points in \(\calR(X)\) satisfy
\[
        |A|\ll X,
        \quad |B|\ll X^2,
        \quad |J|\ll X^3,
        \quad |E|\ll X^4.
\]
Fix a prime $p \geq 5$. By Proposition \ref{prop:good-primes}, non-minimality at
$p$ is equivalent to
\[
        p\mid A,\qquad p^2\mid B,\qquad p^3\mid J,\qquad p^4\mid E.
\]
Therefore, in any one of the affine lattices $v_\nu+\Lambda_{2,3}$, the number
of $(A,B,J,E)$ that give integral equations that are non-minimal at $p$ is bounded by
\[
        \ll
        \left(\frac{X}{p}+1\right)
        \left(\frac{X^2}{p^2}+1\right)
        \left(\frac{X^3}{p^3}+1\right)
        \left(\frac{X^4}{p^4}+1\right),
\]
with the implied constant depending only on the finite collection of lattices.  Since
$A\ne0$, any such prime $p$ divides the non-zero integer obtained from $A$ after
clearing its denominator, which is uniformly bounded by  Proposition
\ref{prop:bad-primes}, and as 
$|A|\ll X$ on \(\calR(X)\), only primes $p\ll X$ can occur.  Summing the
preceding estimate over $M<p\ll X$ and over the finitely many affine lattices
proves \eqref{eq:tail-sieve-bound}.

The contribution from \(A=0\) is \(O(X^9)\) by Lemma \ref{lem:lattice}.  Taking
$X\to\infty$ in the finite sieve estimate and using \eqref{eq:tail-sieve-bound}
gives
\[
        \limsup_{X\to\infty}\left|\frac{\#\calG(X)}{X^{10}}
        -\Vol(\calR(1))\,\beta_{2,3}\!\!\prod_{5\le p\le M}\!(1-p^{-10})\right|
        \;\ll\;\sum_{p>M}p^{-10}.
\]
The result follows by taking \(M\to\infty\).
\end{proof}

\begin{proof}[Proof of Theorem \ref{thm:main}]
By Proposition \ref{prop:main-count}, it remains only to remove the degenerate tuples $(A,B,J,E)$ with
\(A=0\) and \(\disc(F_P)=0\). By Lemma \ref{lem:lattice}, these loci
contribute \(O(X^9)\) points. We can identify the remaining tuples with \(\calW^{\rm mark}(X)\) by  \eqref{eq:groomed-count} and Proposition \ref{prop:bijection}, and the conclusion follows.
\end{proof}
\begin{remark}
A natural question one can ask is how our result compares to counting genus $2$ curves $C$ whose Jacobian has a non-trivial  rational $3$-torsion point. This amounts to forgetting the marking of the torsion point $T$. Let $\mathcal{W}^{\text{unmarked}}(X)$ denote the set of such curves of height at most $X$. Since $\#\Jac(C)(\bar{\Q})[3]=81$, we have $\#\mathcal{W}^{\text{unmarked}}(X)=O(X^{10})$. In fact, we can pin down the leading constant exactly, and we have
\[
    \#\mathcal{W}^{\text{unmarked}}(X)=\frac{c}{2}X^{10}+o(X^{10}).
\]
We give a brief sketch of the argument. As the Weil pairing is Galois equivariant, the group $\Jac(C)(\Q)[3]$ is isotropic for the Weil pairing, and hence is of order at most $9$. Curves with a $\Q$-rational $3$-torsion subgroup of order $9$ correspond to a thin subset $M$ of $\PP(1,2,3,4)(\Q)$. To see this, we consider the moduli space of tuples $(C,P,T,K)$ where $C$ is a genus 2 curve, $P$ is a Weierstrass point, $T$ is a 3-torsion point on $\Jac C$, and  $K$ is a maximal isotropic subgroup of $\Jac C[3]$ such that $\langle T \rangle \leq K$, and the map $(C,P,T,K) \to (C,P,T)$ which forgets the subgroup $K$. Then $M$ is contained in the image of this map, and it follows that $M$ is a thin set in the sense of Serre. Hence, by \cite[Theorem 1.1]{chan2026thin} applied to $\PP(1,2,3,4)$, there are $o(X^{10})$ of them. Outside this negligible subset, the rational fibers of the map that forgets the marking $(C,P,T)\mapsto (C,P)$
are of size two, corresponding to the points $T$ and $-T$, and so the asymptotic count is half that of the count of curves with a marked $3$-torsion point.

It would be interesting to determine exactly the asymptotic count of genus $2$ curves $C$ with $\#\Jac(C)(\Q)[3]=9$. We expect that this can be achieved using the results of \cite{bruinflynntesta} and the methods of this paper.
\end{remark}
\bibliographystyle{plain}
 \bibliography{references}

\appendix

\section{A Groebner basis computation}\label{app:no-cusp}
The only computer calculation used in the proof is the following integer
Groebner basis computation, needed in the proof of Lemma \ref{lem:no-cusp} and in
the proof of Proposition \ref{prop:good-primes}. Here is the Magma code.  

\begin{verbatim}
Z := Integers();
R<B,J,E> := PolynomialRing(Z, 3, "lex");
A := R!1;

T3 := -27*(5*A^4 - 30*A^2*B - 36*A*J + 30*B^2 - 18*E);
T2 :=  27*(6*A^3*J + 5*A^2*B^2 - 24*A*B*J - 20*B^3
           + 18*B*E - 9*J^2);
T1 :=   9*(A^8 - 10*A^6*B - 12*A^5*J + 30*A^4*B^2
           + 72*A^3*B*J - 20*A^2*B^3 + 36*A^2*J^2
           - 84*A*B^2*J + 54*A*E*J - 15*B^4
           + 18*B^2*E - 18*B*J^2);
T0 :=      A^10 - 12*A^8*B - 18*A^7*J + 45*A^6*B^2
           + 144*A^5*B*J - 40*A^4*B^3 + 108*A^4*J^2
           - 252*A^3*B^2*J - 45*A^2*B^4 - 432*A^2*B*J^2
           + 81*A^2*E^2 - 144*A*B^3*J + 162*A*B*E*J
           - 216*A*J^3 - 12*B^5 + 18*B^3*E - 27*B^2*J^2;

I := ideal< R | T3, T2, T1, T0 >;
GB := GroebnerBasis(I);
GB;

assert &or[ g eq R!9 or g eq R!-9 : g in GB ];
\end{verbatim}

\end{document}